\theoremstyle{thmstyleone}
\newtheorem{Theorem}{Theorem}[section]%  meant for continuous numbers
\newtheorem{Lemma}[Theorem]{Lemma}% 
\newtheorem{Corollary}[Theorem]{Corollary}% 
\theoremstyle{thmstyletwo}%
\newtheorem{Remark}[Theorem]{Remark}%
\theoremstyle{thmstylethree}%
\newtheorem{Definition}[Theorem]{Definition}%
\definecolor{darkred}{rgb}{0.8,0,0}
\title{Gelfand-Shilov spaces for extended Gevrey regularity}
\author{Nenad Teofanov\footnote{Faculty of Sciences, University of Novi Sad, Serbia. nenad.teofanov@dmi.uns.ac.rs}, Filip Tomi\'c\footnote{Faculty of Technical Sciences, University of Novi Sad, Serbia. filip.tomic@uns.ac.rs}, Milica \v Zigi\'c\footnote{Faculty of Sciences, University of Novi Sad, Serbia. milica.zigic@dmi.uns.ac.rs}}
\date{\today}
\begin{document}

\maketitle

\begin{abstract}
 We consider spaces of smooth functions obtained by relaxing Gevrey-type regularity and decay conditions. It is shown that these classes fit well within the general framework of the weighted matrices approach to ultradifferentiable functions. We examine equivalent ways of introducing Gelfand-Shilov spaces related to the extended Gevrey regularity and derive their nuclearity. In addition to the Fourier transform invariance property, we present their corresponding symmetric characterizations. Finally, we consider some time-frequency representations of the introduced classes of ultradifferentiable functions.  
\end{abstract}

{\em AMS subj class:} 46E10,  26E10, 46A11 

{\em Keywords and phrases:} Gevrey sequences, ultradifferentiable functions, Gel\-fand-Shilov spaces, nuclearity, time-frequency representations

%%%%%%%%%%%%%%%%%
\section{Introduction} \label{Sec1}
%%%%%%%%%%%%%%%%%%%%

Traditionally, Gelfand-Shilov spaces 
of ultradifferentiable functions 
are introduced as sets of all $f\in C^\infty (\mathbb R^n)$ such that for a given $t,s >0$,
\begin{equation}\label{Eq:GeShSpDef}
\sup _{x\in \mathbb R^n} |x^\alpha \partial^\beta f(x)| 
\leq C h^{|\alpha  + \beta |}\alpha !^t \beta !^s, \qquad \alpha, \beta \in \mathbb{N}_0 ^n,
\end{equation} 
for some constants $C,h>0$, see \cite{GelfandShilov}.  In other words,
the regularity and decay properties of $f$
are controlled by Gevrey sequences 
$(\beta !^s)_{\beta \in \mathbb{N}_0 ^n},$ and $(\alpha !^t)_{\alpha \in \mathbb{N}_0 ^n}$ respectively.

These spaces are widely used in regularity theory for partial differential equations, e.g., to describe exponential decay and the holomorphic extension of solutions to globally elliptic equations \cite{CGR-1, CGR-2}, and in the regularizing properties of the Boltzmann equation \cite{LMPSX}. For additional examples, we refer to \cite{Gramchev, Nicola-Rodino, Cordero-Rodino}.

More general Gelfand-Shilov type spaces  are obtained by using various types of defining sequences
apart from the Gevrey sequences.

In this paper, we introduce Gelfand-Shilov type spaces through regularity and decay conditions that
go beyond the Gevrey sequences, by considering two-parameter dependent sequences of the form
$(p^{\tau p^{\sigma} })_{p \in \mathbb{N}_0 }$, $\tau > 0$, $\sigma > 1$. 
Such sequences have  been recently used to describe a convenient extension of Gevrey spaces, 
\cite{Garrido-2,PTT-01,PTT-03}. 
%The case where $\tau > 0$ and $\sigma = 1$ corresponds to Gevrey classes 
%and classical Gelfand-Shilov spaces as given in \eqref{Eq:GeShSpDef}.
For an overview and some applications of the extended Gevrey classes, we refer to \cite{TTZ2024}. 
We note that the extended Gevrey classes are defined by regularity conditions, without reference to decay, whereas the elements of the spaces considered here, in addition to regularity, also enjoy suitable decay properties.

A commonly used approach to  ultradifferentiable functions, based on  defining sequences,
was introduced in \cite{Komatsuultra1}. Another widely used approach is the one based on the so-called weight functions,
as considered in  \cite{BMT}. Recently, Rainer and Schindl proposed an approach that unifies both classical methods, \cite{RS}. It is based on the concept of weight matrices, or multi-indexed weight sequences in the terminology of \cite{DNV2021}. A general framework that provides a unified treatment of Gelfand-Shilov spaces defined via weight sequences or weight functions was very recently presented in  \cite{BJOS,DNV2024}. There, one can find general results on the inclusion relation between the considered global ultradifferentiable classes.

Our aim is to study a new type of Gelfand–Shilov spaces that combines the general approach based on weight matrices with the framework of extended Gevrey regularity. This is achieved by considering the so-called limit classes, i.e., suitable unions and intersections with respect to the parameter, see Definition \ref{Def:extGS}.

In this way, we obtain prominent examples of global ultradifferentiable classes defined by general weighted matrix approach, which go beyond the classical theory.
At the same time, by considering specific sequences, we are able to derive results that cannot be extracted from the general theory,
while also extending those  that are well-known in the context  of the Gevrey type regularity.

To be more precise, we perform a thorough examination of general conditions from \cite{DNV2021} that provide nuclearity property of the considered spaces. Thereafter, we show the Fourier transform invariance, and symmetric characterization of Gelfand-Shilov spaces for extended Gevrey regularity \'a la Chung-Chung-Kim \cite{Chung 1996}. We emphasize that the sequence
$(p^{\tau p^{\sigma} })_{p \in \mathbb{N}_0 }$, $\tau > 0$, $\sigma > 1$, does not satisfy Komatsu's $(M.2)$ condition:
\begin{equation*} \label{ultradiff-stability}
(M.2) \qquad (\exists A,B > 0) \quad M_{p+q}\leq AB^{p+q} M_p M_q,  \qquad p,q\in \mathbb N_0.
\end{equation*}
%known as stability under the action of ultradifferentiable %operators /convolution. 
This invokes nontrivial modifications to the existing proofs for classical Gelfand-Shilov type spaces.

Furthermore, we study the properties of several time-frequency representations acting on Gelfand–Shilov spaces via extended Gevrey sequences. This leads to characterizations that extend those given in, e.g., \cite{GZ, Teo2018, Toft-2012}, in the context of classical Gelfand–Shilov spaces.

%%%%%%%%%%%%%%%%
\section{Preliminaries}  \label{Sec2}
%%%%%%%%%%%%%%%

%%%%%%
\subsection{Weight  sequences and matrices}
%%%%%

We first recall the notion of a weight matrix \cite{RS}, or multi-indexed weight sequence system in terms of \cite{DNV2021, DNV2024}.

\begin{Definition}
Let $M = ( M_{\alpha})_{\alpha\in \mathbb N_0 ^n}$ 
be a weight sequence, i.e. $M_{\alpha} >0$, $\alpha\in \mathbb N_0 ^n$,
$M_0 = 1$, and 
$ \lim_{|\alpha| \rightarrow \infty} (M_{\alpha})^{1/ |\alpha|} = \infty$.

A family 
$\mathcal{M} =\{ M ^{(\lambda)} \colon \lambda>0\}$ of weight sequences is a weight matrix if
$ M_{\alpha} ^{(\lambda)} \leq  M_{\alpha} ^{(\mu)} $ for all $\alpha\in \mathbb N_0 ^n$
and $\lambda \leq \mu$.    
\end{Definition} 

We say that the sequence $M = ( M_{\alpha})_{\alpha\in \mathbb N_0 ^n}$ 
is an isotropic sequence if 
$  M_{|\alpha|} = M_{p}, $  $p\in\mathbb N_0$, for some sequence
$ ( M_{p})_{p\in\mathbb N_0} $.

\par 

Next we introduce isotropic sequences related to extended Gevrey regularity.
%Thus, their main properties will be formulated for non-negative indices.

%%%%
%{\color{red}A family 
%$\mathcal{M} =\{ M ^{(\lambda)} :\lambda>0\}$ of weight sequences is a weight matrix %if $ M_{\alpha} ^{(\lambda)} \leq  M_{\alpha} ^{(\mu)} $ for all $\alpha\in \mathbb %N_0 ^n$ and $\lambda \leq \mu$.}
%%%%

Let $\tau>0$ and $\sigma>1.$ By  $M^{(\tau)} = 
(M_{p}^{\tau,\sigma})_{p\in\mathbb N_0}$ we denote 
 the sequence of positive numbers given by
\begin{equation} \label{eq:the-sequence}
    M_{p}^{\tau,\sigma}=p^{\tau p^\sigma},\quad p\in \mathbb{N},
     \qquad M_0^{\tau,\sigma}=1.
\end{equation}

\par

We are interested in  the weight matrix $\mathcal{M}_\sigma,$ $\sigma>1$, given by
    \begin{equation}\label{eq:the-weight-matrix}
\mathcal{M}_\sigma=\left\{M^{(\tau)} =  (M_{p}^{\tau,\sigma})_{p\in\mathbb N_0}\colon \tau>0\right\}.
    \end{equation}

\par

The main properties of $(M_{p}^{\tau,\sigma})_{p\in\mathbb N_0}$,
$\tau>0$, $\sigma>1$, related to Komatsu's theory of ultradistributions
are given in the next lemma (cf. \cite[Lemmas 2.2 and 3.1]{PTT-01}, 
\cite[Lemma 2]{TTZ2024}).

\begin{Lemma} \label{osobineM_p_s}
Let $\tau>0$, $\sigma>1$, $M_0 ^{\tau,\sigma}=1$,
and $M_p ^{\tau,\sigma}=p^{\tau p^{\sigma}}$, $p\in \mathbb N$.
Then the following properties hold:
\begin{itemize}
\item[$(M.1)$] \hspace{1em} 
$ (M_p^{\tau,\sigma})^2\leq M_{p-1}^{\tau,\sigma}M_{p+1}^{\tau,\sigma}, \;$ $p\in \mathbb N, $ \medskip
\item[$\widetilde{(M.2)}$]  \hspace{1em}   
$ M_{p+q}^{\tau,\sigma}\leq C^{p^{\sigma} + q^{\sigma}}
M_p^{\tau 2^{\sigma-1},\sigma}M_q^{\tau 2^{\sigma-1},\sigma},\;$
$ p,q\in \mathbb N_0,\quad$
for some constant $ C\geq 1$,\medskip
\item[$\widetilde{(M.2)'}$] \hspace{1em}
$ M_{p+1}^{\tau,\sigma}\leq C ^{p^{\sigma}} M_p^{\tau,\sigma}, \;$ 
$ p\in \mathbb N_0, \quad$
for some constant $C\geq 1$, \medskip
\item[$(M.3)'$] \hspace{1em}
$ \displaystyle   \sum\limits_{p=1}^{\infty}\frac{M_{p-1}^{\tau,\sigma}}{M_p^{\tau,\sigma}} <\infty.
$
\end{itemize} 
\end{Lemma}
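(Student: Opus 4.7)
The plan is to verify the four properties by reducing each to an elementary estimate on the function $f(x)=x^\sigma\log x$ on $[1,\infty)$, exploiting $\log M_p^{\tau,\sigma}=\tau\, p^\sigma\log p$ for $p\ge 1$. For $(M.1)$, after taking logarithms the claim becomes the midpoint convexity $2f(p)\le f(p-1)+f(p+1)$ for $p\ge 2$; this follows from $f''(x)=x^{\sigma-2}\bigl(\sigma(\sigma-1)\log x+2\sigma-1\bigr)\ge 0$ on $[1,\infty)$, while the remaining case $p=1$ is the trivial bound $1\le 2^{\tau 2^\sigma}$.

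For $\widetilde{(M.2)}$, I would first invoke the sub-additivity estimate $(p+q)^\sigma\le 2^{\sigma-1}(p^\sigma+q^\sigma)$, which comes from the convexity of $x\mapsto x^\sigma$ for $\sigma>1$. After taking logarithms, the required inequality is equivalent to showing $\tau\,2^{\sigma-1}\bigl(p^\sigma\log(1+q/p)+q^\sigma\log(1+p/q)\bigr)\le (p^\sigma+q^\sigma)\log C$ for all $p,q\ge 1$. Applying $\log(1+t)\le t$ followed by Young's inequality $p^{\sigma-1}q\le \tfrac{\sigma-1}{\sigma}\,p^\sigma+\tfrac{1}{\sigma}\,q^\sigma$ bounds each summand on the left by $p^\sigma+q^\sigma$, so the inequality holds with $C=e^{\tau 2^\sigma}$. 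The boundary cases $p=0$ or $q=0$ are immediate since $2^{\sigma-1}>1$.

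For $\widetilde{(M.2)'}$, I would decompose $\log\bigl(M_{p+1}^{\tau,\sigma}/M_p^{\tau,\sigma}\bigr)=\tau\bigl[(p+1)^\sigma-p^\sigma\bigr]\log(p+1)+\tau\,p^\sigma\log(1+1/p)$ and estimate each term using $(p+1)^\sigma-p^\sigma\le \sigma(p+1)^{\sigma-1}$ and $\log(1+1/p)\le 1/p$; both contributions are $o(p^\sigma)$, so the sum is at most $p^\sigma\log C$ for $C$ sufficiently large. For $(M.3)'$, a short Taylor expansion yields $\log(M_p^{\tau,\sigma}/M_{p-1}^{\tau,\sigma})\ge c\,p^{\sigma-1}\log p$ for large $p$ and some $c>0$ depending on $\tau,\sigma$. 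Consequently $M_{p-1}^{\tau,\sigma}/M_p^{\tau,\sigma}\le p^{-cp^{\sigma-1}}$ eventually, which is summable since $\sigma>1$ produces decay faster than any geometric series.

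The only genuinely non-routine step is the parameter shift $\tau\mapsto \tau\,2^{\sigma-1}$ appearing in $\widetilde{(M.2)}$: this loss is intrinsic, originating from the sub-additivity defect of $x^\sigma$ for $\sigma>1$, and it is precisely the obstruction to the classical Komatsu condition $(M.2)$ emphasized in the introduction. No other step presents a real obstacle.
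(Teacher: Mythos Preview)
Your proposal is correct. The paper itself does not supply a proof of this lemma: it simply refers to \cite{PTT-01} and \cite{TTZ2024}, and only records the explicit estimate \eqref{M.3''}, namely $M_{p-1}^{\tau,\sigma}/M_p^{\tau,\sigma}\le (2p)^{-\tau(p-1)^{\sigma-1}}$, as the reason behind $(M.3)'$. Your argument is thus a genuine, self-contained verification rather than a paraphrase of something in the text.

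A brief comparison: for $(M.1)$ you use convexity of $x\mapsto x^\sigma\log x$ on $[1,\infty)$, which is the cleanest route and matches the standard proof in the cited references. For $\widetilde{(M.2)}$ you combine the convexity inequality $(p+q)^\sigma\le 2^{\sigma-1}(p^\sigma+q^\sigma)$ with $\log(1+t)\le t$ and Young's inequality; this is exactly the mechanism behind the proof in \cite{PTT-01}, and your remark that the shift $\tau\mapsto\tau 2^{\sigma-1}$ is intrinsic is spot on. For $(M.3)'$ your mean-value estimate $\log(M_p^{\tau,\sigma}/M_{p-1}^{\tau,\sigma})\ge c\,p^{\sigma-1}\log p$ is equivalent to \eqref{M.3''} up to the explicit constants. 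The only place to be slightly more careful is $\widetilde{(M.2)'}$: saying the two pieces are $o(p^\sigma)$ is an asymptotic statement, whereas the claim is for all $p\in\mathbb N_0$; but since the ratio $\bigl[\sigma(p+1)^{\sigma-1}\log(p+1)+p^{\sigma-1}\bigr]/p^\sigma$ is bounded on $p\ge 1$ and the case $p=0$ is trivial ($M_1^{\tau,\sigma}=1$), a uniform $C$ does exist, as you implicitly assert.
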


In addition,  $%\displaystyle
\left ( \left (\frac{M_{p}^{\tau,\sigma}}{p!} \right )^{1/p} \right )_{p\in\mathbb N_0}$
%$\tau>0$, $\sigma>1$, 
is an almost increasing sequence, i.e.
$$
\left(\frac{M_{p}^{\tau,\sigma}}{p!} \right)^{1/p} 
\leq 
C \left(\frac{M_{q}^{\tau,\sigma}}{q!} \right)^{1/q}, \qquad
p\leq q,
$$
which implies 
$ \lim_{p \rightarrow \infty} (M_{p})^{1/ p} = \infty$
(cf. \cite{TTZ2024}).

We note that (M.1) implies 
\begin{equation} \label{M.1'}
M_p^{\tau,\sigma}M_q^{\tau,\sigma}\leq M_{p+q}^{\tau,\sigma},\qquad p,q\in\mathbb N_0,
\end{equation}
and that $(M.3)'$ follows from the estimate
\begin{align}\label{M.3''}
        \frac{M_{p-1}^{\tau,\sigma}}{M_p^{\tau,\sigma}}\leq\frac{1}{(2p)^{\tau(p-1)^{\sigma-1}}},\quad p\in\mathbb N.
\end{align}
The proof of \eqref{M.3''} can be found in e.g. \cite[Appendix A]{TTZ2024}.

More properties of  $(M_{p}^{\tau,\sigma})_{p\in\mathbb N_0}$ in the context of weight matrices approach to ultradifferentiable functions (in the sense of A. Rainer and G. Schindl)
can be found in e.g. \cite{Garrido-2}.  

\par 

The sequence $(M_{p}^{\tau,\sigma})_{p\in\mathbb N_0}$ gives rise to 
extended Gevrey classes $\mathcal E_{\tau,\sigma}(\mathbb R ^n)$, also called 
Pilipovi\'c-Teofanov-Tomi\'c classes (PTT-classes) in \cite{Garrido-2,
Garrido-Lastra-Sanz}. 
We refer to \cite{TTZ2024} for a recent survey on PTT-classes and their relation to
classes of ultradifferentiable functions given by the weight matrix approach.

In fact, we use the weight matrix approach and consider PTT-limit classes.
As it is noted in \cite{Garrido-2} the main reason to consider 
these classes is their  stability under the action of ultradifferential operators.

%%%%%
\subsection{Weight functions and the Lambert function}
%%%%%%

Recall, a non-negative, continuous, even, and increasing on $[0,\infty)$ function $\omega$ defined on $\mathbb R$, $\omega(0)=0$, is called \emph{BMT (Braun-Meise-Taylor) weight function}, or simply \emph{weight function}, if it satisfies the following conditions:
\begin{itemize}
\item[($\alpha$)] $\omega(2t)=O(\omega(t)),\quad t\to \infty,$
\vspace{0.1cm}
\item[($\beta$)] $ \omega(t)=O(t),\quad t\to\infty,$
\vspace{0.1cm}
\item[($\gamma$)] $o(\omega(t))=\log t,\quad t \to \infty,$
\vspace{0.1cm}
\item [($\delta$)] $\varphi(t)=w(e^t),\quad$ is convex.
\end{itemize} 

Some classical examples of BMT functions are
\begin{equation}
\label{BMTexamples}
\omega (t) = \ln^{s}_+ |t|,\quad \omega (t)=\frac{|t|}{\ln^{s-1} (e+|t|)},\quad s>1,\; t\in \mathbb R^n,
\end{equation} where $\ln_+ k=\max\{0,\ln k\}$. Also, $\omega (t)=|t|^s$ is a weight function if and only if $0<s\leq 1$.

We say that functions $f$ and $g$ are equivalent if $f=O(g)$ and $g=O(f)$.

Next we introduce the Lambert $W$ function which 
describes the precise asymptotic behavior of associated function to the sequence $(M_p^{\tau,\sigma})_{p \in \mathbb N_0} $.

The \emph{Lambert $W$ function} is defined as the inverse of $z e^{z}$, $z\in {\mathbb C}$. By $W(x)$, we denote the restriction of its principal branch to $[0,\infty)$. It is used as a convenient tool to describe asymptotic behavior in different contexts. We refer to \cite{LambF,Mezo} for a review of some applications of the Lambert $W$ function in pure and applied mathematics.

Some basic properties of the Lambert function $W$ are given below:
\newline
$(W1)$ \hspace{1em} $W(0)=0$, $W(e)=1$, $W(x)$ is continuous, increasing and concave on $[0,\infty)$,
\newline
$(W2)$ \hspace{1em} $W(x e^{x})=x$ and $ x=W(x)e^{W(x)}$,  $x\geq 0$,
\newline
$(W3)$ \hspace{1em} $W$ can be represented in the form of the absolutely convergent series
\begin{equation*}
W(x)=\ln x-\ln (\ln x)+\sum_{k=0}^{\infty}\sum_{m=1}^{\infty}c_{km}\frac{(\ln(\ln x))^m}{(\ln x)^{k+m}},\quad x\geq x_0>e,
\end{equation*}
with suitable constants $c_{km}$ and  $x_0 $, wherefrom  the following  estimates hold:
\begin{equation}
\label{sharpestimateLambert}
\ln x -\ln(\ln x)\leq W(x)\leq \ln x-\frac{1}{2}\ln (\ln x), \quad x\geq e.
\end{equation}
The equality in \eqref{sharpestimateLambert} holds if and only if $x=e$.

Note that $(W2)$ implies
\begin{equation*}
\label{PosledicaLambert1}
W(x\ln x)=\ln x,\quad x>1.
\end{equation*}
By using $(W3)$ we obtain
\begin{equation*}
\label{PosledicaLambert1.5}
W(x)\sim \ln x, \quad x\to \infty,
\end{equation*}
and therefore
\begin{equation*}
\label{PosledicaLambert2}
W(C x)\sim W(x),\quad x\to \infty,
\end{equation*}
for any $C>0$.

\begin{Remark}
\label{RemarkBMT}
By \cite[Theorem 1]{TT0} it follows that 
$\displaystyle 
\omega(t)=\frac{\ln^{\sigma}(1+|t|)}{W^{\sigma-1}(\ln(1+|t|))}
$, $\sigma>1$, $t\in \mathbb R^n$, 
is equivalent to a BMT function, and therefore it satisfies $(\alpha)-(\gamma)$. 
Actually, in \cite{TT0}, the function $\ln_+|\cdot|$ is considered instead of  $\ln(1+|\cdot|)$, which does not change the conclusion.
\end{Remark} 

%Recall that the associated function to the sequence $M^{\tau,\sigma}_p$ is given by 
%\begin{equation}
%    \label{eq:associated-function}
%T_{\tau,\sigma}(x)=\sup_{p\in \mathbb{N}_0}\ln \frac{x^p}{M^{\tau, \sigma}_p}, \qquad
%x>0. 
%\end{equation}

%Then by \cite[Proposition 2]{TT0}
%it follows that
%\begin{multline}
%\label{nejednakostAsocirana}
%\frac{1}{A}  \tau^{-\frac{1}{\sigma-1}} \frac{\ln^{\frac{\sigma}{\sigma-1}}(|t|)}{W^{\frac{1}{\sigma-1}}(\ln(|t|))}-B
% \leq T_{\tau,\sigma}(|t|) 
%\\
% \leq A  \tau^{-\frac{1}{\sigma-1}} 
%\frac{\ln^{\frac{\sigma}{\sigma-1}}(|t|)}{W^{\frac{1}{\sigma-1}}%(\ln(|t|))}+B, \quad |t| > 1,
%\end{multline}
%for suitable constants $A>1$ and $B>0$.

\section{Gelfand-Shilov type classes}

\subsection{Definition and its relatives}

Inspired by the original source \cite{GelfandShilov}, we consider separate and joint conditions on decay and regularity, introducing
isotropic Gelfand-Shilov type spaces
in terms of the weight matrix $\mathcal{M}_\sigma$, $ \sigma >1$,
as follows.

\begin{Definition}\label{Def:extGS}
Let $\sigma>1$, the weight matrix $\mathcal{M}_\sigma$ be given by 
\eqref{eq:the-weight-matrix}, and let $\varphi\in C^\infty(\mathbb R^n)$. Then
the Gelfand-Shilov spaces of Roumieu type related to $\mathcal{M}_\sigma$ are given by
\begin{align*}
\varphi\in \mathcal S^{\{\mathcal{M}_\sigma\}} (\mathbb R^n) \quad \Leftrightarrow \quad & (\exists \tau>0) \;(\forall \alpha\in\mathbb N_0^n)\;(\exists C_\alpha>0)\;(\forall \beta\in\mathbb N_0^n) \\
   &\sup_{x}|x^\alpha \partial^\beta \varphi(x)|\leq C_\alpha M^{\tau,\sigma}_{|\beta|};\\
    \varphi\in \mathcal S_{\{\mathcal{M}_\sigma\}} (\mathbb R^n) \quad \Leftrightarrow \quad & (\exists \tau>0) \;(\forall \beta\in\mathbb N_0^n)\;(\exists C_\beta>0)\;(\forall \alpha\in\mathbb N_0^n) \\
   &\sup_{x}|x^\alpha \partial^\beta \varphi(x)|\leq C_\beta M^{\tau,\sigma}_{|\alpha|};\\
\varphi\in \mathcal  S_{\{\mathcal{M}_\sigma\}}^{\{\mathcal{M}_\sigma\}} (\mathbb R^n) \quad \Leftrightarrow \quad & (\exists \tau>0)\;(\exists C>0)\;(\forall \alpha,\beta\in\mathbb N_0^n) \\
   &\sup_{x}|x^\alpha \partial^\beta \varphi(x)|\leq C M^{\tau,\sigma}_{|\alpha|} 
   M^{\tau,\sigma}_{|\beta|},
\end{align*}
and the Gelfand-Shilov spaces of Beurling type related to $\mathcal{M}_\sigma$ are given by
\begin{align*}
\varphi\in \mathcal S^{(\mathcal{M}_\sigma)} (\mathbb R^n) \quad \Leftrightarrow \quad & (\forall \tau>0) \;(\forall \alpha\in\mathbb N_0^n)\;(\exists C_{\alpha}>0)\;(\forall \beta\in\mathbb N_0^n) \\
   &\sup_{x}|x^\alpha \partial^\beta \varphi(x)|\leq C_{\alpha} M^{\tau,\sigma}_{|\beta|};\\
    \varphi\in \mathcal S_{(\mathcal{M}_\sigma)} (\mathbb R^n) \quad \Leftrightarrow \quad & (\forall \tau>0)\;(\forall \beta\in\mathbb N_0^n)\;(\exists C_{\beta}>0)\;(\forall \alpha\in\mathbb N_0^n) \\
   &\sup_{x}|x^\alpha \partial^\beta \varphi(x)|\leq C_{\beta}
   M^{\tau,\sigma}_{|\alpha|};\\
\varphi\in \mathcal S_{(\mathcal{M}_\sigma)}^{(\mathcal{M}_\sigma)} (\mathbb R^n) \quad \Leftrightarrow \quad & (\forall \tau>0) (\exists C>0)(\forall \alpha,\beta\in\mathbb N_0^n) \\
   &\sup_{x}|x^\alpha \partial^\beta \varphi(x)|\leq C
   M^{\tau,\sigma}_{|\alpha|}    M^{\tau,\sigma}_{|\beta|}.
\end{align*}

\end{Definition}

We will use $[\mathcal{M}_\sigma]$ as a common notation for 
$\{ \mathcal{M}_\sigma \}$ or $(\mathcal{M}_\sigma )$, i.e.
$\mathcal S_{[\mathcal{M}_\sigma]}^{[\mathcal{M}_\sigma]} $ is either
$ \mathcal  S_{\{\mathcal{M}_\sigma\}}^{\{\mathcal{M}_\sigma\}}$
or $ \mathcal S_{(\mathcal{M}_\sigma)}^{(\mathcal{M}_\sigma)}$, and so on.

\par

By Definition \ref{Def:extGS} we immediately see that 
\begin{equation}\label{eq:incl}
    \mathcal S_{[\mathcal{M}_\sigma]}^{[\mathcal{M}_\sigma]} (\mathbb R^n) \subseteq \mathcal S_{[\mathcal{M}_\sigma]} (\mathbb R^n)\cap \mathcal S^{[\mathcal{M}_\sigma]} (\mathbb R^n),
\end{equation}
and $\mathcal S_{(\mathcal{M}_\sigma)}^{(\mathcal{M}_\sigma)} (\mathbb R^n) \subseteq \mathcal S_{\{\mathcal{M}_\sigma\}}^{\{\mathcal{M}_\sigma\}} (\mathbb R^n)$, 
for every $\sigma>1.$

\par

Notice that Gelfand-Shilov spaces related to a 
weight matrix $\mathcal{M} =\{ M ^{(\lambda)} :\lambda>0\}$, when the weight sequence  
$M = ( M_{\alpha})_{\alpha\in \mathbb N_0 ^n}$  satisfies suitable conditions,
are usually given by
\begin{multline} \label{eq:commom-GS-with-geom-factor-Rou}
    \mathscr S_{\{\mathcal{M} \}}^{\{\mathcal{M} \}} (\mathbb R^n)  =
   \Big \{ \varphi \in C^{\infty} (\mathbb R^n) \colon
    (\exists \lambda >0) \;(\exists h>0)\;(\exists C >0) \\
   \sup_{x} |x^\alpha \partial^\beta \varphi(x)|\leq 
   C h^{|\alpha + \beta|} M^{\lambda}_\alpha M^{\lambda}_\beta,
   \;\; \forall \alpha,\beta\in\mathbb N_0^n
   \Big\},
\end{multline}
and
\begin{multline} \label{eq:commom-GS-with-geom-factor}
    \mathscr S_{(\mathcal{M} )}^{(\mathcal{M} )} (\mathbb R^n)  =
    \Big\{ \varphi \in C^{\infty} (\mathbb R^n) \colon
    (\forall \lambda >0) \;(\forall h>0)\;(\exists C = C_{\lambda, h}>0) \\
   \sup_{x} |x^\alpha \partial^\beta \varphi(x)|\leq 
   C h^{|\alpha + \beta|} M^{\lambda}_\alpha M^{\lambda}_\beta,
   \;\; \forall \alpha,\beta\in\mathbb N_0^n
   \Big\},
\end{multline}
see e.g. \cite[(2.16)]{BJOS-2021}. On the other hand, the ''geometric growth factor'' 
$h^{|\alpha + \beta|}$  
in  \eqref{eq:commom-GS-with-geom-factor-Rou} and \eqref{eq:commom-GS-with-geom-factor}
is absent in the  definition of corresponding
Gelfand-Shilov spaces in \cite[(3.1)]{DNV2021}.

Let us show that for Gelfand-Shilov spaces in Definition
\ref{Def:extGS} the expected presence of "non-standard growth factor"
$h^{|\alpha|^\sigma + |\beta|^\sigma}$  
related to
the weight matrix  $\mathcal{M}_\sigma$ given by
\eqref{eq:the-weight-matrix} is irrelevant.

In the proof of Theorem \ref{thm:with-or-without-h} 
we will use the following result, \cite[Lemma 2.3]{PTT-01}. 

\begin{Lemma} \label{lm1}
\begin{equation} \label{eq:lema-from-ptt}
 \sup_{\rho >0}  \frac{h^{\rho^\sigma}}{\rho^{\tau \rho ^\sigma}} 
 = e^{\frac{\tau}{\sigma e} h^{\frac{\sigma}{\tau}}}, \qquad h>0,
  \end{equation}
for any $\tau > 0$ and $\sigma >1$.
  \end{Lemma}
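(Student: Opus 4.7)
The plan is to reduce the supremum to an elementary single-variable optimization by taking logarithms. Setting $f(\rho) := h^{\rho^\sigma}/\rho^{\tau \rho^\sigma}$, one has $\ln f(\rho) = \rho^\sigma \ln h - \tau \rho^\sigma \ln \rho$, and I would make the substitution $u = \rho^\sigma$ to rewrite this as $g(u) := u \ln h - (\tau/\sigma)\, u \ln u$, a smooth function on $(0,\infty)$. The benefit of this substitution is that the exponent $\sigma$ is absorbed into a clean linear combination of $u$ and $u \ln u$, allowing one to handle $\sigma$ and $\tau$ symmetrically.

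Next I would differentiate: $g'(u) = \ln h - (\tau/\sigma)(\ln u + 1)$, which vanishes at the unique critical point $u_\star = e^{-1} h^{\sigma/\tau}$. Since $g''(u) = -\tau/(\sigma u) < 0$ on $(0,\infty)$, the function $g$ is strictly concave, so $u_\star$ is the global maximum. Substituting back and simplifying (using $\ln u_\star = (\sigma/\tau)\ln h - 1$), the $\ln h$ terms cancel and one obtains $g(u_\star) = (\tau/(\sigma e)) h^{\sigma/\tau}$; exponentiation then gives exactly \eqref{eq:lema-from-ptt}.

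I would finish by verifying the boundary behaviour to confirm that $u_\star$ really delivers the supremum on the open ray $(0,\infty)$: as $u \to 0^+$, both $u \ln h$ and $u \ln u$ tend to $0$, so $g(u) \to 0$; as $u \to \infty$, the term $-(\tau/\sigma)\, u \ln u$ dominates and $g(u) \to -\infty$. There is no genuine obstacle here---the computation is mechanical---although one should be mindful that $\ln h$ may be negative (when $h < 1$), in which case $u_\star$ lies close to the origin; strict concavity of $g$ nevertheless isolates the maximum, so the stated identity holds uniformly for all $h > 0$.
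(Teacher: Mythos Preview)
Your proof is correct and follows essentially the same strategy as the paper's: take logarithms and perform a one-variable calculus optimization, with your substitution $u=\rho^\sigma$ being a cosmetic simplification of the paper's direct differentiation in $\rho$ (their critical point $\rho_0=h^{1/\tau}e^{-1/\sigma}$ corresponds exactly to your $u_\star=e^{-1}h^{\sigma/\tau}$). Your version is in fact slightly more complete, since you verify concavity and boundary behaviour explicitly, whereas the paper leaves these implicit.
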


\begin{proof} We provide the proof, correcting a typographical error from \cite{PTT-01}.
Define $f(\rho)=\frac{h^{\rho^{\sigma}}}{\rho^{\tau \rho^{\sigma}}}$, $\rho>0$. Differentiating $\ln f(\rho),$ we obtain
$(\ln f(\rho))'= \rho^{\sigma-1}(\sigma\ln h-\tau\sigma \ln \rho -\tau)$, $\rho>0.$
For $\rho_0:=h^{\frac{1}{\tau}}e^{-1/\sigma},$ we have
$\max_{\rho>0}\ln f(\rho)= \ln f(\rho_0)= \frac{\tau}{e\sigma} h^{\frac{\sigma}{\tau}}$,
which proves the claim.
\end{proof}
  
  Lemma \ref{lm1} implies
that for any given $h > 0$ we have
\begin{equation}\label{jede h}
    h ^{|\alpha|^\sigma}  M^{\frac{\tau}{2}, \sigma} _{|\alpha|} \leq
 C   M^{\tau, \sigma} _{|\alpha|}, \qquad \forall \alpha \in \mathbb{N}_0 ^n,
 \end{equation}
where the constant $C>0$ depends on $\tau$, $\sigma$ and $h$.

We also introduce the notation:
\begin{multline} \label{eq:GS-with-factor}
   \mathscr S_{(\mathcal{M}_\sigma ) }^{ (\mathcal{M}_\sigma )} (\mathbb R^n)  =
    \Big\{ \varphi \in C^{\infty} (\mathbb R^n) \colon
    (\forall \tau>0) \;(\forall h>0)\;(\exists C = C_{\tau, h}>0) \\
   \sup_{x} |x^\alpha \partial^\beta \varphi(x)|\leq 
   C h^{|\alpha|^\sigma + |\beta|^\sigma } M^{\tau, \sigma}_{|\alpha|} 
   M^{\tau, \sigma}_{|\beta|},
   \;\; \forall \alpha,\beta\in\mathbb N_0^n
   \Big\},
\end{multline}
and the spaces 
$ \mathscr S_{ \{\mathcal{M}_\sigma \}} ^{ \{\mathcal{M}_\sigma \}} (\mathbb R^n) $,
$ \mathscr S^{ [\mathcal{M}_\sigma ]} (\mathbb R^n) $,
and $ \mathscr S_{ [\mathcal{M}_\sigma ]} (\mathbb R^n) $
%$ \mathscr S^{ \{\mathcal{M}_\sigma \}} (\mathbb R^n) $,
%$ \mathscr S_{ \{\mathcal{M}_\sigma \}} (\mathbb R^n) $, and 
are introduced in a similar fashion.

\begin{Theorem} \label{thm:with-or-without-h}
Let $\sigma>1$, and let the weight matrix $\mathcal{M}_\sigma$ be given by 
\eqref{eq:the-weight-matrix}, 
$ M_{p}^{\tau,\sigma}=p^{\tau p^\sigma}$, $ p\in \mathbb{N}$, $ M_0^{\tau,\sigma}=1$.
Then 
\begin{equation}
\mathcal S_{[\mathcal{M}_\sigma ] }^{ [\mathcal{M}_\sigma]} (\mathbb R^n)
= \mathscr S_{[\mathcal{M}_\sigma ] }^{ [\mathcal{M}_\sigma ]} (\mathbb R^n),    
\end{equation}
$\mathcal S^{ [\mathcal{M}_\sigma]} (\mathbb R^n)
= \mathscr S^{ [\mathcal{M}_\sigma ]} (\mathbb R^n),$
and 
$\mathcal S_{[\mathcal{M}_\sigma ] } (\mathbb R^n)
= \mathscr S_{[\mathcal{M}_\sigma ] } (\mathbb R^n).$
\end{Theorem}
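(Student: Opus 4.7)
The plan is to use the absorption estimate \eqref{jede h}, an immediate consequence of Lemma \ref{lm1}, which plays the role of the classical $(M.2)$-type stability that $\mathcal{M}_\sigma$ famously lacks. Concretely, \eqref{jede h} says that for any $h>0$, the factor $h^{|\alpha|^\sigma}$ can be absorbed into the weight sequence at the cost of doubling $\tau$; conversely, applied with $1/h$ in place of $h$, it produces such a factor at the cost of halving $\tau$. Six sub-claims need to be checked (three equalities, each in a Beurling and a Roumieu version), but they all follow the same two-step template.

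For the Roumieu joint case $\mathcal{S}_{\{\mathcal{M}_\sigma\}}^{\{\mathcal{M}_\sigma\}}=\mathscr{S}_{\{\mathcal{M}_\sigma\}}^{\{\mathcal{M}_\sigma\}}$, the inclusion $\mathcal{S}\subseteq \mathscr{S}$ is immediate by setting $h=1$ in the definition of $\mathscr{S}$. For the converse, I would start from a bound $\sup_x |x^\alpha \partial^\beta \varphi(x)|\leq C h^{|\alpha|^\sigma+|\beta|^\sigma} M^{\tau,\sigma}_{|\alpha|} M^{\tau,\sigma}_{|\beta|}$ and apply \eqref{jede h} (with $\tau$ replaced by $2\tau$) separately in the $\alpha$- and $\beta$-variables to get $h^{|\alpha|^\sigma} M^{\tau,\sigma}_{|\alpha|}\leq C_1 M^{2\tau,\sigma}_{|\alpha|}$ and similarly for $\beta$. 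The resulting estimate is precisely an $\mathcal{S}_{\{\mathcal{M}_\sigma\}}^{\{\mathcal{M}_\sigma\}}$-bound at parameter $2\tau$.

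For the Beurling joint case $\mathcal{S}_{(\mathcal{M}_\sigma)}^{(\mathcal{M}_\sigma)}=\mathscr{S}_{(\mathcal{M}_\sigma)}^{(\mathcal{M}_\sigma)}$, the inclusion $\mathscr{S}\subseteq \mathcal{S}$ is again immediate (take $h=1$ in the universal quantifier over $h$). The direction $\mathcal{S}\subseteq \mathscr{S}$ is the only genuinely delicate step: given arbitrary $\tau,h>0$, I would apply the $\mathcal{S}_{(\mathcal{M}_\sigma)}^{(\mathcal{M}_\sigma)}$ condition at parameter $\tau'=\tau/2$ to obtain $\sup_x |x^\alpha \partial^\beta \varphi(x)|\leq C M^{\tau/2,\sigma}_{|\alpha|} M^{\tau/2,\sigma}_{|\beta|}$, and then apply \eqref{jede h} with $h^{-1}$ in place of $h$ (legitimate since $h^{-1}>0$) to rewrite $M^{\tau/2,\sigma}_{|\alpha|}\leq C_h\,h^{|\alpha|^\sigma} M^{\tau,\sigma}_{|\alpha|}$, and analogously in $\beta$. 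Multiplying yields the required $\mathscr{S}_{(\mathcal{M}_\sigma)}^{(\mathcal{M}_\sigma)}$-bound with constant $C\cdot C_h^2$, which is admissible since this constant is allowed to depend on both $\tau$ and $h$.

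The remaining two pairs of equalities concern the one-sided spaces $\mathcal{S}^{[\mathcal{M}_\sigma]}$ and $\mathcal{S}_{[\mathcal{M}_\sigma]}$, and are handled by exactly the same argument applied only in the $\beta$-variable (resp.\ the $\alpha$-variable); the $h$-dependent constant coming from \eqref{jede h} is harmless since the one-sided definitions permit the constant to depend on the ``free'' multi-index $\alpha$ (resp.\ $\beta$). The main obstacle is precisely the Beurling step above, the key trick being to invoke \eqref{jede h} with $1/h$ rather than $h$ in order to produce a small majorant of the form $h^{|\alpha|^\sigma}$; this is what makes the equivalence of the two definitions non-trivial, and reflects why the geometric-type factor is genuinely irrelevant for the matrix $\mathcal{M}_\sigma$ despite the absence of $(M.2)$.
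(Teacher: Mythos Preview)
Your proof is correct and follows essentially the same route as the paper: both arguments hinge on the absorption estimate \eqref{jede h} (equivalently Lemma~\ref{lm1}) to trade the factor $h^{|\alpha|^\sigma}$ against a doubling or halving of the parameter $\tau$. The paper passes through an intermediate ``$(\forall\tau)(\exists h_0)$'' condition before reaching $\mathcal S_{(\mathcal M_\sigma)}^{(\mathcal M_\sigma)}$, whereas you argue directly via the $1/h$ trick, which is a slight streamlining but not a genuinely different idea.
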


\begin{proof}
We prove $ \mathcal S_{(\mathcal{M}_\sigma ) }^{ (\mathcal{M}_\sigma)} (\mathbb R^n)
= \mathscr S_{(\mathcal{M}_\sigma ) }^{ (\mathcal{M}_\sigma )} (\mathbb R^n)$,
and leave the other cases to the reader.

Let us first show that 
$ \varphi \in \mathscr S_{(\mathcal{M}_\sigma ) }^{ (\mathcal{M}_\sigma )} (\mathbb R^n)$
if and only if
\begin{equation} \label{eq:GS-with-exists}
  (\forall \tau>0) \;(\exists h_0>0)\;(\exists C >0) \;\;\;
   \sup_{x} |x^\alpha \partial^\beta \varphi(x)|\leq 
   C h_0 ^{|\alpha|^\sigma + |\beta|^\sigma } M^{\tau, \sigma}_{|\alpha|} 
   M^{\tau, \sigma}_{|\beta|},
  \end{equation}
for all $  \alpha,\beta\in\mathbb N_0^n$.
The implication \eqref{eq:GS-with-factor} $ \Rightarrow$
 \eqref{eq:GS-with-exists} trivially holds. 
 
 For the opposite inclusion we invoke \eqref{eq:lema-from-ptt}, wherefrom   
 for any given $\tau > 0$ and $h>0$ we have
\begin{eqnarray*}
h_0 ^{|\alpha|^\sigma} M^{\frac{\tau}{2}, \sigma} _{|\alpha|} & = &
\left ( \frac{h_0 }{h} \right ) ^{|\alpha|^\sigma} h ^{|\alpha|^\sigma} 
\frac{ M^{\tau, \sigma} _{|\alpha|}}{ M^{\frac{\tau}{2}, \sigma} _{|\alpha|}}
\leq \left(  \sup_{\alpha >0} 
\frac{( h_0 /h ) ^{|\alpha|^\sigma}}{M^{\frac{\tau}{2}, \sigma} _{|\alpha|}} \right)
\cdot
h ^{|\alpha|^\sigma}  M^{\tau, \sigma} _{|\alpha|} \\
& = & e^{\frac{\tau}{2\sigma e}  ( h_0 /h ) ^{\frac{2\sigma}{\tau}}}  
h ^{|\alpha|^\sigma}  M^{\tau, \sigma} _{|\alpha|} 
= C_{\tau, h} h ^{|\alpha|^\sigma}  M^{\tau, \sigma} _{|\alpha|},     
\end{eqnarray*}
and similarly
$$ h_0 ^{|\beta|^\sigma} M^{\frac{\tau}{2}, \sigma} _{|\beta|} \leq
 C_{\tau,h} h ^{|\beta|^\sigma}  M^{\tau, \sigma} _{|\beta|}.
$$
Consequently,
$$
 \sup_{x} |x^\alpha \partial^\beta \varphi(x)|\leq
  C_{\tau, h} h ^{|\alpha|^\sigma + |\beta|^\sigma}  M^{\tau, \sigma} _{|\alpha|} 
  M^{\tau, \sigma} _{|\beta|},
$$
i.e.  \eqref{eq:GS-with-exists}  $ \Rightarrow$ \eqref{eq:GS-with-factor}.
Thus, it remains to prove that $ \varphi \in \mathcal S_{(\mathcal{M}_\sigma ) }^{ (\mathcal{M}_\sigma)} (\mathbb R^n)$ if and only if \eqref{eq:GS-with-exists} holds.
Of course, $ \varphi \in \mathcal S_{(\mathcal{M}_\sigma ) }^{ (\mathcal{M}_\sigma)} (\mathbb R^n)$ implies  \eqref{eq:GS-with-exists} (take $h_0 \geq 1 $ in 
\eqref{eq:GS-with-exists}). Next, we
assume that \eqref{eq:GS-with-exists} holds and use
\eqref{jede h} to conclude that
$ \varphi \in \mathcal S_{(\mathcal{M}_\sigma ) }^{ (\mathcal{M}_\sigma)} (\mathbb R^n)$.
\end{proof}

\begin{Remark} \label{rem:alfa-beta}
By  $\widetilde{(M.2)}$, \eqref{M.1'}, and Theorem \ref{thm:with-or-without-h}
it follows that the product $M^{\tau, \sigma} _{|\alpha|} 
  M^{\tau, \sigma} _{|\beta|}$ in the definition of 
$\mathcal S_{[\mathcal{M}_\sigma ] }^{ [\mathcal{M}_\sigma]} (\mathbb R^n)$ can be replaced by $M^{\tau, \sigma} _{|\alpha + \beta|}$ yielding the same spaces.
 \end{Remark}

Next we show that instead of the sup norm one can use the $L^2$-norm
in Definition \ref{Def:extGS}. This result will be used in Section \ref{Sec3}.
We refer to \cite[Theorem 1]{Pil89} or 
\cite[Theorem 6.1.6]{Nicola-Rodino}
for Gelfand-Shilov spaces related to Gevrey sequences $ M_{\alpha} = \alpha ! ^t$, $\alpha\in \mathbb N_0 ^n$, $ t>0$.

We introduce the following notation.
\begin{align}
\varphi\in \mathcal  S_{\{\mathcal{M}_\sigma\},2}^{\{\mathcal{M}_\sigma\}} (\mathbb R^n) \quad \Leftrightarrow \quad & (\exists \tau>0)\;(\exists C>0)\;(\forall \alpha,
\beta\in\mathbb N_0^n) \nonumber \\
   &\|x^\alpha \partial^\beta \varphi(x)\|_{L^2}
  \leq C M^{\tau,\sigma}_{|\alpha|}     M^{\tau,\sigma}_{|\beta|};
  \label{eq:L2Roumieu}\\
\varphi\in \mathcal S_{(\mathcal{M}_\sigma),2}^{(\mathcal{M}_\sigma)} (\mathbb R^n) \quad \Leftrightarrow \quad & (\forall \tau>0) (\exists C>0)(\forall \alpha,\beta\in\mathbb N_0^n) 
\nonumber \\
   &\|x^\alpha \partial^\beta \varphi(x)\|_{L^2}\leq C
   M^{\tau,\sigma}_{|\alpha|}    M^{\tau,\sigma}_{|\beta|},
   \label{eq:L2Beurling}
\end{align}
where $\| \cdot \|_{L^2}$ denotes the usual $L^2$-norm,
%($\| f \|_{L^2} = (\int_{\mathbb R^n} |f(x)|^2 dx)^{1/2}$).
%and the spaces $ \mathbf S^{ (\mathcal{M}_\sigma )} _2 (\mathbb R^n) $,
%$ \mathbf S_{ (\mathcal{M}_\sigma ),2} (\mathbb R^n) $, 
%$ \mathbf S^{ \{\mathcal{M}_\sigma \}} _2 (\mathbb R^n) $,
%$ \mathbf S_{ \{\mathcal{M}_\sigma \},2} (\mathbb R^n) $, and
%$ \mathbf S_{ \{\mathcal{M}_\sigma \},2} ^{ \{\mathcal{M}_\sigma \}} (\mathbb R^n) $
%are introduced in a similar fashion.
and $\mathcal S_{[\mathcal{M}_\sigma],2}^{[\mathcal{M}_\sigma]} $ is either
$ \mathcal  S_{\{\mathcal{M}_\sigma\},2}^{\{\mathcal{M}_\sigma\}}$
or $ \mathcal S_{(\mathcal{M}_\sigma),2}^{(\mathcal{M}_\sigma)}$.

\begin{Theorem} \label{thm:L2-and-Linfty}
Let $\sigma>1$, and let the weight matrix $\mathcal{M}_\sigma$ be given by 
\eqref{eq:the-weight-matrix}, 
$ M_{p}^{\tau,\sigma}=p^{\tau p^\sigma}$, $ p\in \mathbb{N}$, $ M_0^{\tau,\sigma}=1$.
Then 
\begin{equation}
\mathcal S_{[\mathcal{M}_\sigma ] }^{ [\mathcal{M}_\sigma]} (\mathbb R^n)
=  \mathcal S_{[\mathcal{M}_\sigma],2}^{[\mathcal{M}_\sigma]} (\mathbb R^n).    \label{eq:L2-and-Linfty}
\end{equation}
\end{Theorem}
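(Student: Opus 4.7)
The plan is to establish both inclusions in \eqref{eq:L2-and-Linfty}, treating the Roumieu and Beurling cases in parallel by tracking how the parameter $\tau$ is transformed.

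For $\mathcal S_{[\mathcal{M}_\sigma]}^{[\mathcal{M}_\sigma]}(\mathbb R^n) \subseteq \mathcal S_{[\mathcal{M}_\sigma],2}^{[\mathcal{M}_\sigma]}(\mathbb R^n)$, I would insert the weight $(1+|x|^2)^{-N}$ for a fixed integer $N > n/2$ and estimate
\[
\|x^\alpha \partial^\beta \varphi\|_{L^2} \leq \|(1+|x|^2)^{-N}\|_{L^2}\cdot \sup_x (1+|x|^2)^N|x^\alpha \partial^\beta \varphi(x)|.
\]
Expanding $(1+|x|^2)^N = \sum_{|\mu|\le N} c_\mu x^{2\mu}$ bounds the supremum by a finite linear combination of $\sup_x |x^{\alpha+2\mu}\partial^\beta \varphi|$; the $L^\infty$ hypothesis converts this into $M^{\tau,\sigma}_{|\alpha|+2|\mu|} M^{\tau,\sigma}_{|\beta|}$. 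Property $\widetilde{(M.2)}$ from Lemma \ref{osobineM_p_s}, together with the fact that $|\mu|$ is bounded, controls $M^{\tau,\sigma}_{|\alpha|+2|\mu|}$ by a constant times $C^{|\alpha|^\sigma} M^{\tau 2^{\sigma-1},\sigma}_{|\alpha|}$, and \eqref{jede h} absorbs the factor $C^{|\alpha|^\sigma}$ at the price of inflating the parameter further to $\tau 2^\sigma$.

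For the reverse inclusion I would use the Sobolev embedding $H^k(\mathbb R^n)\hookrightarrow L^\infty(\mathbb R^n)$ for some integer $k>n/2$, applied to $f = x^\alpha \partial^\beta \varphi$, to obtain
\[
\|x^\alpha \partial^\beta \varphi\|_{L^\infty} \leq C_k \sum_{|\gamma|\le k}\|\partial^\gamma(x^\alpha \partial^\beta \varphi)\|_{L^2}.
\]
A Leibniz expansion rewrites each $\partial^\gamma(x^\alpha \partial^\beta \varphi)$ as a linear combination of $\frac{\alpha!}{(\alpha-\delta)!}x^{\alpha-\delta}\partial^{\beta+\gamma-\delta}\varphi$ for $\delta \leq \min(\gamma,\alpha)$. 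Applying the $L^2$ hypothesis yields bounds involving $M^{\tau,\sigma}_{|\alpha-\delta|} M^{\tau,\sigma}_{|\beta+\gamma-\delta|}$; monotonicity of the sequence in the index handles the first factor, while $\widetilde{(M.2)}$ together with \eqref{jede h} handles the bounded shift $|\gamma-\delta|\le k$ in the second factor. The combinatorial weight $\alpha!/(\alpha-\delta)!\le |\alpha|^k$ grows polynomially in $|\alpha|$ and is absorbed by a final application of \eqref{jede h} (since polynomial growth is dominated by $h^{|\alpha|^\sigma}$ for any $h>1$), again with the parameter enlarged by a factor $2^\sigma$.

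The Roumieu/Beurling distinction amounts to straightforward quantifier bookkeeping: in the existential case the inflated parameter is still a legitimate choice, and in the universal case one applies the hypothesis with $\tau_0 = \tau/2^\sigma$ to land at the target $\tau$. The main technical difficulty, as compared with the classical Gelfand-Shilov setting, is that Komatsu's $(M.2)$ is unavailable, so the standard splitting $M_{p+q}\leq AB^{p+q}M_p M_q$ must be replaced by $\widetilde{(M.2)}$; this introduces both a subexponential factor $C^{|\alpha|^\sigma}$ and a parameter shift $\tau \mapsto \tau 2^{\sigma-1}$, and \eqref{jede h} (i.e.\ Lemma \ref{lm1}) is the precise device that lets us close the estimates with only a bounded loss in the parameter.
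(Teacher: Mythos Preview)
Your proposal is correct and follows essentially the same route as the paper: the weight $(1+|x|^2)^{-N}$ to pass from $L^\infty$ to $L^2$, and the Sobolev embedding $H^k\hookrightarrow L^\infty$ together with the Leibniz formula for the reverse direction, with the bounded index shifts absorbed via the sequence properties and \eqref{jede h}. The only minor differences are that the paper invokes $\widetilde{(M.2)'}$ (which keeps $\tau$ fixed) rather than the full $\widetilde{(M.2)}$, and that you are in fact more explicit than the paper about absorbing the combinatorial factor $\alpha!/(\alpha-\delta)!\le |\alpha|^k$.
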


\begin{proof} 
We prove the Beurling case, i.e.
$ \mathcal S_{(\mathcal{M}_\sigma ) }^{ (\mathcal{M}_\sigma)} (\mathbb R^n)
= \mathcal S_{(\mathcal{M}_\sigma ),2 }^{ (\mathcal{M}_\sigma )} (\mathbb R^n)$,
and leave the Roumieu case to the reader.

For the inclusion 
$ \mathcal S_{(\mathcal{M}_\sigma ),2 }^{ (\mathcal{M}_\sigma )} (\mathbb R^n) 
\subset \mathcal S_{(\mathcal{M}_\sigma ) }^{ (\mathcal{M}_\sigma)} (\mathbb R^n)
$, we invoke the Sobolev embedding 
$ H^s (\mathbb R^n) \hookrightarrow L^\infty (\mathbb R^n)$, $s> n/2$,
which implies
\begin{equation} \label{eq:sobolev-emb}
\sup_{x} |x^\alpha \partial^\beta \varphi(x)|\leq
\tilde C \| x^\alpha \partial^\beta \varphi(x) \|_{H^s} \leq
C \sum_{|\gamma| \leq s} \| \partial^\gamma (x^\alpha \partial^\beta \varphi(x) )\|_{L^2},
\end{equation}
for some $C, \tilde C >0$,
since the Sobolev norm
$\| f(x) \|_{H^s} = \| (1+|\omega|^2)^{s/2} \hat f(\omega) \|_{H^s}
$
is equivalent to
$ \sum_{|\gamma| \leq s} \| \partial^\gamma f(x)\|_{L^2}$.

\par

By the Leibniz formula 
\begin{equation} \label{eq:several-terms}
\sum_{|\gamma| \leq s} \|\partial^\gamma (x^\alpha \partial^\beta \varphi(x) )\|_{L^2}  \leq
\sum_{|\gamma| \leq s}
\sum_{\delta \leq \min \{ \gamma, \alpha \}}  
\binom{\gamma}{\delta} \binom{\alpha}{\delta} \delta!
\| x^{\alpha - \delta} \partial ^{\beta + \gamma - \delta}  \varphi(x)\|_{L^2}. \\
% &\leq 
%C_s 2^{|\alpha|}
%\| x^{\alpha - \delta} \partial ^{\beta + \gamma - \delta}  \varphi(x)\|_{L^2},
\end{equation}
%since the number of terms in the sums can be estimated by an integer independent on %$ \alpha$.

Now $ \varphi \in  \mathcal S_{(\mathcal{M}_\sigma ),2 }^{ (\mathcal{M}_\sigma )} (\mathbb R^n) $ implies
\begin{align}
\| x^{\alpha - \delta} \partial ^{\beta + \gamma - \delta}  \varphi(x)\|_{L^2} 
&\leq 
C_1  M^{\tau,\sigma}_{|\alpha - \delta|}    M^{\tau,\sigma}_{|\beta + \gamma - \delta|} \nonumber
\\
&\leq 
C_1  M^{\tau,\sigma}_{|\alpha| }    M^{\tau,\sigma}_{|\beta + s|} \nonumber \\
&\leq 
C_1  M^{\tau,\sigma}_{|\alpha |}  C_2 ^{s |\beta +s|^\sigma}  M^{\tau,\sigma}_{|\beta|} \nonumber \\
& \leq
C_1  M^{\tau,\sigma}_{|\alpha |}  C_2 ^{s 2^{\sigma - 1}(|\beta |^\sigma + s^\sigma)}  M^{\tau,\sigma}_{|\beta|} \nonumber \\
& \leq
C_{3}  M^{\tau,\sigma}_{|\alpha |}  h^{|\beta |^\sigma }  M^{\tau,\sigma}_{|\beta|}, 
\label{eq:estimates}
\end{align}
for some $h>0$, $C_3 >0$ (which depends on $\tau, \sigma$ and the dimension $n$),
where we used inequality
\begin{equation} \label{eq:inequ-powers-of-sigma}
(p+q)^\sigma 
\leq 2^{\sigma - 1}(p^\sigma +q^\sigma), \quad  p,q>0, \quad \sigma>1.    
\end{equation}

Since the number of terms in the sums in \eqref{eq:several-terms} 
can be estimated by an integer independent on $ \alpha$,
by \eqref{eq:sobolev-emb}, \eqref{eq:several-terms}, \eqref{eq:estimates},  and 
Theorem \ref{thm:with-or-without-h} it follows that 
$\varphi \in \mathcal S_{(\mathcal{M}_\sigma ) }^{ (\mathcal{M}_\sigma)} (\mathbb R^n)$.

It remains to prove the opposite inclusion,
$ \mathcal S_{(\mathcal{M}_\sigma ) }^{ (\mathcal{M}_\sigma)} (\mathbb R^n)
\subset 
\mathcal S_{(\mathcal{M}_\sigma ),2 }^{ (\mathcal{M}_\sigma )} (\mathbb R^n).
$

Let there be given $s > n/4$, so that $ C_s = \| (1+|x|^2)^{-s} \|_{L^2} <\infty$. By using
$$
(1+|x|^2)^{s} = \sum_{|\gamma| \leq s} 
C_\gamma | x^{2\gamma} |, \qquad x \in \mathbb R^n,
$$
where the constants $C_\gamma$ can be estimated in terms of $s$, and we get
\begin{align}
\| x^{\alpha } \partial ^{\beta }  \varphi(x)\|_{L^2} 
& \leq C_s  \sup_{x} (1+|x|^2)^{s} |x^\alpha \partial^\beta \varphi(x)| \nonumber \\
& = C_s  \sum_{|\gamma| \leq s} 
C_\gamma  \sup_{x} | x^{\alpha + 2\gamma} \partial^\beta \varphi(x)| 
\nonumber \\
%& \leq C_1  \sum_{|\gamma| \leq s}  \sup_{x} (1+|x|^2)^{s} |x^\alpha 
%\partial^\beta \varphi(x)| \nonumber \\
& \leq C_1  \sum_{|\gamma| \leq s} 
 M^{\tau,\sigma}_{|\alpha + 2\gamma|}   M^{\tau,\sigma}_{|\beta|} \nonumber \\
& \leq C_1  \sum_{|\gamma| \leq s} C_2 ^{(2\gamma) 2^{\sigma - 1}(|\alpha |^\sigma + (2\gamma) ^\sigma)}
M^{\tau,\sigma}_{|\alpha |}   M^{\tau,\sigma}_{|\beta|} 
\nonumber \\
& \leq C_1  C_2 ^{ 2^{\sigma - 1}(2s) ^{\sigma+1}}
\sum_{|\gamma| \leq s} h^{|\alpha |^\sigma }
M^{\tau,\sigma}_{|\alpha |}   M^{\tau,\sigma}_{|\beta|}  
\nonumber \\
& \leq C
h^{|\alpha |^\sigma }
M^{\tau,\sigma}_{|\alpha |}   M^{\tau,\sigma}_{|\beta|},
\label{eq:Linfty-implies-L2}
\end{align}
for some $C, C_1, C_2 >0$ which depends on the dimension $n$ and $\sigma>0$.

Thus if 
$\varphi \in 
\mathcal S_{(\mathcal{M}_\sigma ) }^{ (\mathcal{M}_\sigma)} (\mathbb R^n)$, 
by \eqref{eq:Linfty-implies-L2} and  Theorem \ref{thm:with-or-without-h} we get
$\varphi \in \mathcal S_{(\mathcal{M}_\sigma ),2 }^{ (\mathcal{M}_\sigma)} (\mathbb R^n)$.
\end{proof}

\begin{Remark}  \label{rem:Lp-and-Linfty}
If $L^2$-norm in \eqref{eq:L2Roumieu} and \eqref{eq:L2Beurling} 
is replaced by $L^p$-norm, and we denote the corresponding 
spaces by $\mathcal S_{[\mathcal{M}_\sigma],p}^{[\mathcal{M}_\sigma]} $, $ 1\leq p < \infty$, then 
\eqref{eq:L2-and-Linfty} in Theorem \ref{thm:L2-and-Linfty}
extends to
\begin{equation}
\mathcal S_{[\mathcal{M}_\sigma ] }^{ [\mathcal{M}_\sigma]} (\mathbb R^n)
=  \mathcal S_{[\mathcal{M}_\sigma],p}^{[\mathcal{M}_\sigma]} (\mathbb R^n), \quad 1\leq p < \infty.    \label{eq:Lp-and-Linfty}
\end{equation}
This can be proved by using the $L^p$-Sobolev embedding, the H\"older inequality, and a modification of  the proof of Theorem \ref{thm:L2-and-Linfty}. We refer to \cite{Pil89} where Gelfand-Shilov spaces related to Gevrey sequences are considered.
\end{Remark}

It can be easily verified that
$\mathcal S^{ [\mathcal{M}_\sigma]} (\mathbb R^n)$,
$\mathcal S_{ [\mathcal{M}_\sigma]} (\mathbb R^n)$, and
$\mathcal S_{[\mathcal{M}_\sigma ] }^{ [\mathcal{M}_\sigma]} (\mathbb R^n)$ are 
closed under differentiation and multiplication by polynomials.

%%%%%%
%\section{Nuclearity}
%%%%%%

\subsection{Nuclearity}

In this subsection we prove that spaces $\mathcal S_{[\mathcal{M}_\sigma ] }^{ [\mathcal{M}_\sigma]} (\mathbb R^n)$ are nuclear.
This is done by using general theory given in \cite{DNV2021}.
We perform nontrivial calculations to show 
that the general result \cite[Theorem 5.1]{DNV2021} 
can be utilized to prove the nuclearity of 
$\mathcal S_{[\mathcal{M}_\sigma ] }^{ [\mathcal{M}_\sigma]} (\mathbb R^n)$.

Recall that the associated function to the sequence $M^{\tau,\sigma}_p$ is given by 
\begin{equation}
    \label{eq:associated-function}
T_{\tau,\sigma}(x)=\sup_{p\in \mathbb{N}_0}\ln \frac{x^p}{M^{\tau, \sigma}_p}, \qquad
x>0. 
\end{equation}
Then by \cite[Proposition 2]{TT0}
it follows that
\begin{multline}
\label{nejednakostAsocirana}
\frac{1}{A}  \tau^{-\frac{1}{\sigma-1}} \frac{\ln^{\frac{\sigma}{\sigma-1}}(|t|)}{W^{\frac{1}{\sigma-1}}(\ln(|t|))}-B
 \leq T_{\tau,\sigma}(|t|) 
\\
 \leq A  \tau^{-\frac{1}{\sigma-1}} 
\frac{\ln^{\frac{\sigma}{\sigma-1}}(|t|)}{W^{\frac{1}{\sigma-1}}(\ln(|t|))}+B, \quad |t| > 1,
\end{multline}
for suitable constants $A>1$ and $B>0$.

Let us define the family of functions $\mathcal{W}_{\sigma} $ as follows:

%put $\displaystyle \omega_{\sigma}(x)= \frac{\ln^{\frac{\sigma}{\sigma-1}}(1+|x|)}{W^{\frac{1}{\sigma-1}}(\ln(1+|x|))}$, $k\in \mathbb{R}^n$, $\omega_{\sigma}(0)=0$, and define the family of (weight) functions 

\begin{equation}
\mathcal{W}_{\sigma}=\left\{\left(w^{\tau}(x)=e^{T_{\tau,\sigma}(|x|)}\right)_{x\in \mathbb{R}^n}\colon \tau>0\right\}.
\end{equation} 

Then $\varphi\in C^\infty (\mathbb R^n) $ belongs to
$ \mathcal S_{\{\mathcal{W}_\sigma \}}^{\{\mathcal{M}_\sigma \}} (\mathbb R^n) 
$
if
\begin{equation}
%\begin{multline}
\label{DefinicijaWeightNiz}
% \Leftrightarrow   
 (\exists \tau>0)) (\exists C>0)(\forall \beta\in\mathbb N_0^n) 
\quad   \sup_{x}\left| e^{T_{\tau,\sigma}(|x|)} \partial^\beta \varphi(x)\right|\leq C
     M^{\tau,\sigma}_{|\beta|};
%     \end{multline}
\end{equation}
and $\displaystyle \varphi\in \mathcal S_{(\mathcal{W}_\sigma)}^{(\mathcal{M}_\sigma)} (\mathbb R^n) 
$ if 
\begin{equation}
%\begin{multline}
\label{DefinicijaWeightNiz-2}
 (\forall \tau>0)) (\exists C>0)(\forall \beta\in\mathbb N_0^n) 
\quad
   \sup_{x}\left| e^{T_{\tau,\sigma}(|x|)} \partial^\beta \varphi(x)\right|\leq C
     M^{\tau,\sigma}_{|\beta|}.
%     \end{multline}
\end{equation}   

Again, we use 
$\mathcal S_{[\mathcal{W}_\sigma ] }^{ [\mathcal{M}_\sigma]}$ to denote $ \mathcal S_{\{\mathcal{W}_\sigma \}}^{\{\mathcal{M}_\sigma \}} (\mathbb R^n) 
$ 
or
$\mathcal S_{(\mathcal{W}_\sigma)}^{(\mathcal{M}_\sigma)} (\mathbb R^n). 
$

Now we can prove the main result of this section.

\begin{Theorem}
Fix $\sigma>1$ and let $\mathcal{M}_\sigma$ be a weight matrix given with \eqref{eq:the-weight-matrix}. Then: 
\begin{itemize}
\item[1.] $\mathcal S_{[\mathcal{M}_\sigma ] }^{ [\mathcal{M}_\sigma]} (\mathbb R^n)=\mathcal S_{[\mathcal{W}_\sigma ] }^{ [\mathcal{M}_\sigma]} (\mathbb R^n)$,
\item[2.]  $\mathcal S_{[\mathcal{W}_\sigma ] }^{ [\mathcal{M}_\sigma]} (\mathbb R^n)$ is a nuclear space.
\end{itemize}

\end{Theorem}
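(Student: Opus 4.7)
My plan is to prove Part 1 by a direct computation with the associated function and to deduce Part 2 from \cite[Theorem 5.1]{DNV2021} after verifying its hypotheses. The key identity behind Part 1 is $e^{T_{\tau,\sigma}(|x|)} = \sup_{p\in\mathbb{N}_0}|x|^p/M^{\tau,\sigma}_p$, which is immediate from \eqref{eq:associated-function}. For the inclusion $\mathcal S_{[\mathcal W_\sigma]}^{[\mathcal M_\sigma]} \subseteq \mathcal S_{[\mathcal M_\sigma]}^{[\mathcal M_\sigma]}$ I would use that this identity yields $|x|^{|\alpha|}\leq M^{\tau,\sigma}_{|\alpha|}\,e^{T_{\tau,\sigma}(|x|)}$, which, combined with $|x^\alpha|\leq|x|^{|\alpha|}$, converts the weight-function bound on $\partial^\beta\varphi$ directly into the sequence bound on $x^\alpha\partial^\beta\varphi$. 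For the opposite inclusion I would invoke the elementary isotropic estimate $|x|^p \leq C_n^p \max_{|\gamma|=p}|x^\gamma|$ and get
\[
\sup_x e^{T_{\tau',\sigma}(|x|)}\,|\partial^\beta\varphi(x)| \;\leq\; C\, M^{\tau,\sigma}_{|\beta|}\,\sup_{p\geq 0}\frac{C_n^p\,M^{\tau,\sigma}_p}{M^{\tau',\sigma}_p},
\]
which is finite for any $\tau'>\tau$ since the ratio equals $C_n^p\,p^{(\tau-\tau')p^\sigma}\to 0$; the Beurling case picks $\tau<\tau'$ for each prescribed $\tau'$, while the Roumieu case takes $\tau'=2\tau$.

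For Part 2 the plan is to feed the weight-function description from Part 1 into \cite[Theorem 5.1]{DNV2021} and verify, step by step, the structural conditions that result imposes on the pair $(\mathcal M_\sigma,\mathcal W_\sigma)$. The sequence axioms needed are precisely those collected in Lemma \ref{osobineM_p_s}: $(M.1)$, the summability $(M.3)'$ sharpened by \eqref{M.3''}, and the weakened mixing $\widetilde{(M.2)}$ and $\widetilde{(M.2)'}$. On the weight-function side, control of $T_{\tau,\sigma}$ comes from the two-sided asymptotic \eqref{nejednakostAsocirana} together with the Lambert function properties $(W1)$--$(W3)$, which together give the comparison properties of the members of $\mathcal W_\sigma$ under the parameter shift $\tau\mapsto\lambda\tau$.

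The main obstacle is the failure of Komatsu's classical stability $(M.2)$, which would normally give a clean geometric bound $M_{p+q}\leq AB^{p+q}M_pM_q$ and is used repeatedly in nuclearity criteria of this type. Only $\widetilde{(M.2)}$ is available here, and each application introduces the non-standard factor $C^{p^\sigma+q^\sigma}$; left unchecked, such factors would destroy the summability estimates underlying \cite[Theorem 5.1]{DNV2021}. My plan to absorb them is the mechanism already developed in Theorem \ref{thm:with-or-without-h}: by Lemma \ref{lm1}, equivalently \eqref{jede h}, any factor $h^{p^\sigma}$ can be swallowed into $M^{\tau,\sigma}_p$ at the price of slightly enlarging $\tau$ in the Roumieu case or slightly shrinking it in the Beurling case. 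Applying this absorption at every point in the verification where $(M.2)$ would classically be invoked reduces the effective summability requirement to $(M.3)'$, after which the nuclearity of $\mathcal S_{[\mathcal W_\sigma]}^{[\mathcal M_\sigma]}(\mathbb R^n)$ follows from \cite[Theorem 5.1]{DNV2021}, and by Part 1 so does that of $\mathcal S_{[\mathcal M_\sigma]}^{[\mathcal M_\sigma]}(\mathbb R^n)$.
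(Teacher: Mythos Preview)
Your Part 1 argument is essentially the paper's: the same associated-function identity and the same $\tau'=2\tau$ absorption in the Roumieu case, only with a slightly different isotropic inequality ($|x|^p\leq C_n^p\max_{|\gamma|=p}|x^\gamma|$ in place of the paper's $(1/\sqrt n)^{|\alpha|}|x|^{|\alpha|}\leq|x^\alpha|$).

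For Part 2 your overall plan---feed the weight description into \cite[Theorem 5.1]{DNV2021} and check its hypotheses---is exactly what the paper does, but your sketch of the verification is off in two places. First, the relevant \emph{sequence} conditions from \cite{DNV2021} are $\{L\}$ and $\{\mathcal M.2\}'$; these are dispatched by the absorption trick \eqref{jede h} together with $\widetilde{(M.2)'}$, and $(M.3)'$ plays no role here, so your remark that ``the effective summability requirement reduces to $(M.3)'$'' misidentifies what is needed. Second, and more importantly, the \emph{weight-function} conditions $\{M\}$ and $\{N\}$ are not about ``parameter shifts $\tau\mapsto\lambda\tau$'': $\{M\}$ requires $w^{\tau_0}(x+y)\leq C\,w^{\tau_1}(x)\,w^{\tau_2}(y)$ (a translation-submultiplicativity), and $\{N\}$ requires $w^{\tau_0}/w^{\tau}\in L^1$. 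The Lambert asymptotic \eqref{nejednakostAsocirana} by itself does not obviously deliver these. The paper's move is to observe (Remark \ref{RemarkBMT}) that $\omega_\sigma(x)=\ln^{\sigma/(\sigma-1)}(1+|x|)\big/W^{1/(\sigma-1)}(\ln(1+|x|))$ is equivalent to a BMT weight, pass via \eqref{nejednakostAsocirana} to the equivalent family $\mathcal W'_\sigma=\{e^{\omega_\sigma/\lambda}:\lambda>0\}$, and then invoke \cite[Lemma 3.5]{DNV2021}, which yields $\{M\}$ and $\{N\}$ directly from the BMT axioms $(\alpha)$--$(\gamma)$. Without this BMT detour (or an equivalent direct argument for $\{M\}$ and $\{N\}$), your verification is incomplete.
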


\begin{proof}
We prove the Roumieu case, and the Beurling case follows by using similar arguments.

{\em 1.}
Since the sequence $(M^{\tau,\sigma}_p)_{p\in\mathbb{N}_0}$ given by 
\eqref{eq:the-sequence} satisfies $(M.1)$, 
by \cite[Proposition 3.2]{Komatsuultra1}
it follows that
\begin{equation}
\label{KomatsuM1}
M^{\tau,\sigma}_p=\sup_{|x|> 0}\frac{|x|^p}{e^{T_{\tau,\sigma}(|x|)}},\quad p\in\mathbb{N}
\end{equation}
for all $\tau>0$ and $\sigma>1$.

Let $\varphi\in \mathcal S_{\{\mathcal{M}_\sigma \} }^{ \{\mathcal{M}_\sigma\}}  (\mathbb R^n)$. Then there exist $\tau_0, C>0$ such that
$$
\sup_{x \in  \mathbb R^n}|x^\alpha \partial^\beta \varphi(x)|\leq C M^{\tau_0,\sigma}_{|\alpha|} 
   M^{\tau_0,\sigma}_{|\beta|},
   $$ and by the simple inequality $(1/\sqrt{n})^{|\alpha|}|x|^{|\alpha|}\leq |x^{\alpha}|$, we get
$$
|\partial^\beta \varphi(x)|\leq C 
\frac{\sqrt{n}^{|\alpha|} M^{\tau_0,\sigma}_{|\alpha|}}{|x|^{|\alpha|}} 
   M^{\tau_0,\sigma}_{|\beta|}= C \frac{\sqrt{n}^{|\alpha|}}{M^{\tau_0,\sigma}_{|\alpha|}} \frac{ M^{2\tau_0,\sigma}_{|\alpha|}}{|x|^{|\alpha|}} M^{\tau_0,\sigma}_{|\beta|}\leq C' M^{\tau_0,\sigma}_{|\beta|}  \frac{ M^{2\tau_0,\sigma}_{|\alpha|}}{|x|^{|\alpha|}}, $$ for $\alpha,\beta \in \mathbb{N}_0^n,$ 
   $|x|\not=0$, where $C'=C e^{T_{\tau_0,\sigma}(\sqrt{n})}$. By \eqref{KomatsuM1} we     obtain 
$$
 \sup_{x}| e^{T_{\tau,\sigma}(|x|)} \partial^\beta \varphi(x)|\leq 
  C' M^{\tau_0,\sigma}_{|\beta|}  \sup_{x} e^{T_{\tau,\sigma}(|x|)} \frac{ M^{2\tau_0,\sigma}_{|\alpha|}}{|x|^{|\alpha|}} \leq
 C'     M^{\tau,\sigma}_{|\beta|},
$$
i.e. \eqref{DefinicijaWeightNiz} holds with $\tau=2\tau_0$.

\par

For the opposite direction, we note that
$\varphi\in 
\mathcal S_{\{\mathcal{W}_\sigma \} }^{ \{\mathcal{M}_\sigma\}} (\mathbb R^n)$ i.e.
\eqref{DefinicijaWeightNiz}, together with  \eqref{KomatsuM1} and $|x^{\alpha}|\leq |x|^{|\alpha|}$,
implies
$$
|x^{\alpha} \partial^\beta \varphi(x)|
\leq 
C M^{\tau_0,\sigma}_{|\beta|} |x|^{|\alpha|}
e^{-T_{\tau_0, \sigma}(|x|)} 
\leq 
C  M^{\tau_0,\sigma}_{|\beta|}
M^{\tau_0,\sigma}_{|\alpha|}
$$ 
for $\alpha, \beta \in \mathbb{N}_0^n$.
Thus  $\varphi\in \mathcal S_{\{\mathcal{M}_\sigma \} }^{ \{\mathcal{M}_\sigma\}}  (\mathbb R^n)$, which proves {\em 1.}

%$$|\partial^\beta \varphi(x)|\leq C' M^{\tau,\sigma}_{|\beta|} e^{-T_{2\tau,\sigma}(|x|)}\leq C''  M^{\tau,\sigma}_{|\beta|} e^{-\frac{1}{\lambda} \omega_{\sigma}(|x|)},\quad \beta\in \mathbb{N}_0^d, \quad x\in\mathbb{R}^n$$ for suitable $C'', \lambda>0$. This proves .

{\em 2.} By \cite[Theorem 5.1]{DNV2021},  $\mathcal S_{[\mathcal{M}_\sigma ] }^{ [\mathcal{M}_\sigma]} (\mathbb R^n)=\mathcal S_{[\mathcal{W}_\sigma ] }^{ [\mathcal{M}_\sigma]} (\mathbb R^n)$ is a nuclear space if $\mathcal{M}_{\sigma}$ and $\mathcal{W}_{\sigma}$ satisfy the following conditions:
\begin{align*}
\{L\}&\quad (\forall h>0)(\forall \tau>0)(\exists \tau_0,C>0)(\forall \alpha \in \mathbb N^{n})\\ &  h^{|\alpha|} M^{\tau,\sigma}_{|\alpha|}\leq C M^{\tau_0,\sigma}_{|\alpha|} ,\\
\{\mathcal{M}.2\}'&\quad (\forall \tau>0)(\exists \tau_0>0)(\exists h,C>0)(\forall \alpha \in \mathbb N^{n})\\ &  M^{\tau,\sigma}_{|\alpha|+1}\leq C h^{|\alpha|} M^{\tau_0,\sigma}_{|\alpha|}, \\
\{wM\}&\quad (\forall \tau>0) (\exists \tau_0>0)(\exists C>0) (\forall x\in \mathbb{R}^n)\\\ & \sup_{|y|\leq 1} w^{\tau_0}(x+y)\leq C w^{\tau}(x),\\
\{M\}& \quad (\forall \tau_1,\tau_2>0) (\exists \tau_0>0)(\exists C>0) (\forall x,y\in \mathbb{R}^n)\, \\ & w^{\tau_0}(x+y)\leq C w^{\tau_1}(x)w^{\tau_2}(y),\\
\{N\}&\quad (\forall \tau>0)(\exists \tau_0>0) \\ & w^{\tau_0}/w^{\tau}\in L^1,
\end{align*} where $w^{\tau}(x)=e^{T_{\tau,\sigma}(|x|)}$. Note that $\{M\}$ implies $\{wM\}$.

We note that $\{L\}$ follows from Lemma \ref{lm1}, i.e. from \eqref{jede h}.

%%%%%%%%
%$$h^{|\alpha|} M^{\tau,\sigma}_{|\alpha|}=\frac{h^{|\alpha|}}
%{M^{\tau,\sigma}_{|\alpha|}}M^{2\tau,\sigma}_{|\alpha|}\leq C %M^{\tau_0,\sigma}_{|\alpha|},\quad \alpha \in \mathbb{N}^n,$$ for $
%\tau_0=2\tau$ and $C=T_{\tau,\sigma}(h)$.
%%%%%%%

For $\{\mathcal{M}.2\}'$ we take arbitrary $\tau>0$. Then by $\widetilde{(M.2)}'$ we have
$$
M^{\tau,\sigma}_{|\alpha|+1}\leq A^{p^{\sigma}+1} M^{\tau,\sigma}_{|\alpha|}=  A \frac{A^{p^{\sigma}}}{M^{\tau,\sigma}_{|\alpha|}} M^{2\tau,\sigma}_{|\alpha|}\leq C M^{\tau_0,\sigma}_{|\alpha|},
$$ for $\tau_0= 2\tau$ where we used \eqref{eq:lema-from-ptt}.

To verify conditions $\{wM\}$ and $\{N\}$ put 
$$ \displaystyle 
\omega_{\sigma}(x)= \frac{\ln^{\frac{\sigma}{\sigma-1}}(1+|x|)}{W^{\frac{1}{\sigma-1}}(\ln(1+|x|))}, 
\quad x\in \mathbb{R}^n \setminus 0, 
\quad \omega_{\sigma}(0)=0,
$$   
and consider
$$
\mathcal{W}'_{\sigma}=\left\{\left(\omega^{\lambda}(x)=e^{\frac{1}{\lambda}\omega_\sigma(x)}\right)_{x\in \mathbb{R}^n}\colon \lambda>0\right\}.
$$ 
By \eqref{nejednakostAsocirana} we get 
$$
(\forall \tau>0)(\exists \lambda_1, \lambda_2>0)(\exists C_1, C_2>0)(\forall x\in \mathbb{R}^n)\,\, C_1 \omega^{\lambda_1}(x) \leq w^{\tau}(x)\leq C_2\omega^{\lambda_2}(x),
$$ 
%and conversely
%$$(\forall \lambda>0)(\exists \tau_1, \tau_2>0)(\exists C_1, C_2>0)
%(\forall x\in \mathbb{R}^n)\,\, C_1 w^{\tau_1}(x) \leq 
%\omega^{\lambda}(x)\leq C_2 w^{\tau_2}(x).$$ 
wherefrom it follows that the families $\mathcal{W}_{\sigma}$ and $\mathcal{W}'_{\sigma}$ are equivalent.
Thus, it is sufficient to check that 
$\mathcal{W}'_{\sigma}$ satisfies $\{wM\}$ and $\{N\}$.

Indeed, since $\omega_{\sigma}(x)$ is equivalent to a BMT function (see Remark \ref{RemarkBMT}), 
it satisfies BMT conditions $(\alpha)-(\gamma)$. 
Then, \cite[Lemma 3.5]{DNV2021} implies that 
$\mathcal{W}'_{\sigma}$ satisfies conditions $\{M\}$ and $\{N\}$, and the theorem is proved.
\end{proof}

%%%%%%%%%%%%%%%%%
\section{Fourier transform representation}  \label{Sec3}
%%%%%%%%%%%%%%%%%

The Gelfand-Shilov spaces given by \eqref{Eq:GeShSpDef} with
$t= s > 0$ are Fourier transform invariant, which is a convent property in different applications. In this section we address the Fourier transform invariance of $\mathcal S_{[\mathcal{M}_\sigma]}^{[\mathcal{M}_\sigma]} (\mathbb R^n)$, and their 
characterizations in terms of 
the Fourier transform. 

The Fourier transform %$\mathcal F$ 
of $f \in \mathcal S_{[\mathcal{M}_\sigma]}^{[\mathcal{M}_\sigma]} (\mathbb R^n)$ 
is given by
\begin{align*}
    (\mathcal{F}f)(\xi)=\hat f(\xi)=\int_{\mathbb R^n}f(x)e^{-2\pi ix\xi}\;dx,\quad \xi\in \mathbb R^n,
\end{align*}
and the corresponding inverse Fourier transform  is defined as
\begin{align*}
    (\mathcal{F}^{-1}f)(x)=\int_{\mathbb R^n}f(\xi)e^{2\pi ix\xi}\;d\xi,\quad x\in \mathbb R^n.
\end{align*}

The invariance of classical Gelfand-Shilov spaces 
under the Fourier transform is given already in \cite{GelfandShilov}, see also 
a more recent source \cite[Chapter 6]{Nicola-Rodino}.
The Fourier transform invariance for the 
Gelfand-Shilov type spaces related to the extended Gevrey regularity can be stated as follows.

\begin{Theorem}\label{Thm:invariant F}
    Let $\sigma>1$, and let the weight matrix $\mathcal{M}_\sigma$ be given by 
\eqref{eq:the-weight-matrix}, 
$ M_{p}^{\tau,\sigma}=p^{\tau p^\sigma}$, $ p\in \mathbb{N}$, $ M_0^{\tau,\sigma}=1$.
    
Then $ \mathcal{F} ( \mathcal S_{[\mathcal{M}_\sigma]}^{[\mathcal{M}_\sigma]} ) (\mathbb R^n) = 
\mathcal S_{[\mathcal{M}_\sigma]}^{[\mathcal{M}_\sigma]} (\mathbb R^n).$ 
\end{Theorem}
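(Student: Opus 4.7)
The plan is to pass from the $L^\infty$-characterization of $\mathcal{S}_{[\mathcal{M}_\sigma]}^{[\mathcal{M}_\sigma]}(\mathbb R^n)$ to the $L^2$-characterization provided by Theorem \ref{thm:L2-and-Linfty}, so that Plancherel's identity can be exploited directly. Using the standard intertwining between multiplication and differentiation under $\mathcal{F}$, one obtains
\begin{equation*}
\xi^\alpha \partial^\beta \hat f(\xi) = c_{\alpha,\beta}\, \mathcal{F}\bigl(\partial^\alpha (x^\beta f)\bigr)(\xi),
\end{equation*}
with $|c_{\alpha,\beta}| = (2\pi)^{|\beta|-|\alpha|}$, so Plancherel yields
\begin{equation*}
\|\xi^\alpha \partial^\beta \hat f\|_{L^2} = (2\pi)^{|\beta|-|\alpha|}\,\|\partial^\alpha (x^\beta f)\|_{L^2}.
\end{equation*}
Thus the inclusion $\mathcal{F}(\mathcal{S}_{[\mathcal{M}_\sigma]}^{[\mathcal{M}_\sigma]}) \subseteq \mathcal{S}_{[\mathcal{M}_\sigma],2}^{[\mathcal{M}_\sigma]}$ reduces to estimating $\|\partial^\alpha (x^\beta f)\|_{L^2}$ by the Gelfand--Shilov bound for $f$, and Theorem \ref{thm:L2-and-Linfty} then closes the argument.

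Next, I apply the Leibniz rule
\begin{equation*}
\partial^\alpha(x^\beta f) = \sum_{\gamma \leq \min(\alpha,\beta)} \binom{\alpha}{\gamma}\binom{\beta}{\gamma}\gamma!\, x^{\beta-\gamma}\partial^{\alpha-\gamma} f,
\end{equation*}
and bound each summand using the $L^2$-characterization: for $f \in \mathcal{S}_{[\mathcal{M}_\sigma],2}^{[\mathcal{M}_\sigma]}(\mathbb R^n)$ one gets $\|x^{\beta-\gamma}\partial^{\alpha-\gamma}f\|_{L^2} \leq C\, M^{\tau,\sigma}_{|\alpha-\gamma|} M^{\tau,\sigma}_{|\beta-\gamma|}$. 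By \eqref{M.1'}, $M^{\tau,\sigma}_{|\alpha-\gamma|} \leq M^{\tau,\sigma}_{|\alpha|}$ and $M^{\tau,\sigma}_{|\beta-\gamma|} \leq M^{\tau,\sigma}_{|\beta|}$. The combinatorial factor is crudely bounded by $\binom{\alpha}{\gamma}\binom{\beta}{\gamma}\gamma! \leq 2^{|\alpha|+|\beta|}|\gamma|!$, and the number of surviving multi-indices $\gamma$ is dominated by a polynomial in $|\alpha|,|\beta|$. Together with the prefactor $(2\pi)^{|\beta|-|\alpha|}$, all these contributions can be majorized by $h^{|\alpha|^\sigma + |\beta|^\sigma}$ for any fixed $h>0$ and $|\alpha|+|\beta|$ large enough.

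The principal obstacle is precisely that $(M^{\tau,\sigma}_p)$ fails Komatsu's $(M.2)$, so the residual factor $h^{|\alpha|^\sigma + |\beta|^\sigma}$ cannot be disposed of by a geometric absorption of the form $h^{|\alpha|+|\beta|}M^{\tau,\sigma}_{|\alpha|}M^{\tau,\sigma}_{|\beta|}\leq M^{\tau',\sigma}_{|\alpha|}M^{\tau',\sigma}_{|\beta|}$. Instead, I invoke Theorem \ref{thm:with-or-without-h}, in particular the absorption inequality \eqref{jede h}, to reabsorb $h^{|\alpha|^\sigma + |\beta|^\sigma}$ into the $M^{\tau,\sigma}$-weights at the cost of enlarging $\tau$. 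In the Roumieu case the enlargement is direct; in the Beurling case one initiates the chain of estimates from a parameter $\tau/2$ (or smaller) so that the final parameter remains arbitrary, again using \eqref{jede h}. The reverse inclusion $\mathcal{S}_{[\mathcal{M}_\sigma]}^{[\mathcal{M}_\sigma]}(\mathbb R^n) \subseteq \mathcal{F}(\mathcal{S}_{[\mathcal{M}_\sigma]}^{[\mathcal{M}_\sigma]})(\mathbb R^n)$ follows by the same argument applied to $\mathcal{F}^{-1}$, whose kernel differs only by a sign from that of $\mathcal{F}$, and the symmetric conditions in Definition \ref{Def:extGS} make the resulting bounds identical.
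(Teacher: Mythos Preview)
Your argument is correct and takes a genuinely different route from the paper. The paper stays in $L^\infty$ throughout: it estimates $|\partial^\beta(x^\alpha\varphi)|$ and $|x|^2|\partial^\beta(x^\alpha\varphi)|$ pointwise via Leibniz, \eqref{M.1'}, \eqref{gevr to us}, and \eqref{jede h}, and then integrates against $(1+|x|^2)^{-1}$ to bound $|\xi^\beta\partial^\alpha\hat\varphi(\xi)|$ directly. You instead pass to the $L^2$-characterization via Theorem~\ref{thm:L2-and-Linfty}, use Plancherel to convert $\|\xi^\alpha\partial^\beta\hat f\|_{L^2}$ into $\|\partial^\alpha(x^\beta f)\|_{L^2}$, and then run the Leibniz estimate in $L^2$. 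Both arguments hinge on the same absorption device \eqref{jede h} to handle the residual growth factors, and both close via $\mathcal F(\mathcal F\varphi)(x)=\varphi(-x)$. Your approach is arguably cleaner in that Plancherel replaces the explicit weighted integration, but it costs you a dependence on Theorem~\ref{thm:L2-and-Linfty} (hence on Sobolev embedding), whereas the paper's proof is self-contained.

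Two small points. First, your phrase ``for any fixed $h>0$'' is a slip: the factorial contribution $|\gamma|!\leq\min(|\alpha|,|\beta|)!$ is certainly not dominated by $h^{|\alpha|^\sigma+|\beta|^\sigma}$ when $h<1$. What you need---and what suffices for Theorem~\ref{thm:with-or-without-h} and \eqref{jede h}---is \emph{some} $h>1$, which does work since $p\ln p=o(p^\sigma)$ for $\sigma>1$. Second, the treatment of $\gamma!$ would be tidier if you invoke \eqref{gevr to us} directly (as the paper does): $\gamma!\leq C\,M^{\tau,\sigma}_{|\gamma|}$ combines with \eqref{M.1'} to give $M^{\tau,\sigma}_{|\alpha-\gamma|}M^{\tau,\sigma}_{|\gamma|}\leq M^{\tau,\sigma}_{|\alpha|}$, absorbing the factorial without any separate growth estimate.
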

    
\begin{proof}
We give the proof  for the Beurling case, 
since the Roumieu case can be proved in a similar way. 
  
  Let 
  $\varphi\in \mathcal S_{(\mathcal{M}_\sigma)}^{(\mathcal{M}_\sigma)} (\mathbb R^n)$ and let there  be given $\tau>0$.
  
  %We will show that $\mathcal{F}(\varphi)\in \mathcal S_{(\mathcal{M}_\sigma)}%^{(\mathcal{M}_\sigma)} (\mathbb R^n),$ 

%   {\color{red} For the simplicity the proof is given for $n=1.$}
   
  From the Leibniz formula and \eqref{M.1'} we obtain
   \begin{align*}
       |\partial^\beta(x^\alpha \varphi(x))|& =\left|\sum_{0\leq\delta\leq\min\{\alpha,\beta\}} \binom{\alpha}{\delta}\binom{\beta}{\delta}\delta!\,x^{\alpha-\delta}\partial^{\beta-\delta}\varphi(x) \right |\\
       &\leq \sum_{0\leq\delta\leq\min\{\alpha,\beta\}} \binom{\alpha}{\delta}\binom{\beta}{\delta}\delta!\, C\, M^{\frac{\tau}{2},\sigma}_{|\alpha-\delta|} M^{\frac{\tau}{2},\sigma}_{|\beta-\delta|}\\
       &\leq C\, M^{\frac{\tau}{2},\sigma}_{|\alpha|} M^{\frac{\tau}{2},\sigma}_{|\beta|} \sum_{0\leq\delta\leq\min\{\alpha,\beta\}} \binom{\alpha}{\delta}\binom{\beta}{\delta}\, \frac{\delta!}{(M_{|\delta|}^{\frac{\tau}{2},\sigma})^2} .
\end{align*}
Next, by using the fact  that for any $\tau>0$ there exists $C_1>0$ such that 
\begin{equation}\label{gevr to us}
    \delta!\leq C_1 M_{|\delta|}^{\tau,\sigma},\quad \delta\in \mathbb N_0^n,
\end{equation}
(which follows form e.g. \cite[(4)]{TTZ2024}), we obtain
\begin{align*}
       |\partial^\beta(x^\alpha \varphi(x))| &\leq C\, M^{\frac{\tau}{2},\sigma}_{|\alpha|} M^{\frac{\tau}{2},\sigma}_{|\beta|}\, C' \sum_{0\leq\delta\leq\min\{\alpha,\beta\}} \binom{\alpha}{\delta}\binom{\beta}{\delta}\\
%       &\leq C C'\, M^{\frac{\tau}{2},\sigma}_{|\alpha|} M^{\frac{\tau}{2},
%\sigma}_{|\beta|} \, 2^{|\alpha|}\sum_{0\leq\delta\leq\beta} \binom{\beta}
%{\delta}\\
       &\leq C C'\,2^{|\alpha|+|\beta|}\, M^{\frac{\tau}{2},\sigma}_{|\alpha|} M^{\frac{\tau}{2},\sigma}_{|\beta|}\leq C_1  M^{\tau,\sigma}_{|\alpha|} M^{\tau,\sigma}_{|\beta|},\quad \alpha,\beta\in\mathbb N_0^n,
   \end{align*}
   where in the last inequality we applied \eqref{jede h}.
   Similarly, 
   \begin{align*}
       |x|^2|\partial^\beta(x^\alpha \varphi(x))|\leq C''\, 2^{|\alpha|+2+|\beta|}\,M^{\frac{\tau}{2},\sigma}_{|\alpha|+2} M^{\frac{\tau}{2},\sigma}_{|\beta|},\quad \alpha,\beta\in\mathbb N_0^n. 
   \end{align*}
   Applying $\widetilde{ (M.2)'}$ twice and then using \eqref{jede h} again, we obtain
    \begin{align*}
       |x|^2|\partial^\beta(x^\alpha \varphi(x))|&\leq C''\, 2^{|\alpha|+2+|\beta|}\,A^{(|\alpha|+1)^\sigma+|\alpha|^\sigma}\,M^{\frac{\tau}{2},\sigma}_{|\alpha|} M^{\frac{\tau}{2},\sigma}_{|\beta|}\\
       &\leq  C_2  M^{\tau,\sigma}_{|\alpha|} M^{\tau,\sigma}_{|\beta|},\quad \alpha,\beta\in\mathbb N_0^n. 
   \end{align*}
   Finally, 
   %we prove that $\widehat\varphi\in S_\sigma^\sigma (\mathbb R^n),$ i.e. $\varphi\in \widehat S_\sigma^\sigma (\mathbb R^n),$ 
   \begin{align*}
    |\xi^\beta \mathcal{F}(\varphi)^{(\alpha)}(\xi)|& =\left|\mathcal{F}\left(\partial^\beta(x^\alpha\varphi(x))\right)(\xi)\right|=\left|\int_{\mathbb R^n}\partial^\beta(x^\alpha\varphi(x)) \,e^{-2\pi ix\xi}dx\right|\\
    & \leq \int_{\mathbb R^n}|\partial^\beta(x^\alpha\varphi(x))|\,dx=\int_{\mathbb R^n}\frac{|(1+|x|^2)\,\partial^\beta(x^\alpha\varphi(x))|}{1+|x|^2}dx\\
    &\leq \left(C_1\, M^{\tau,\sigma}_{|\alpha|} M^{\tau,\sigma}_{|\beta|} +  C_2\, M^{\tau,\sigma}_{|\alpha|} M^{\tau,\sigma}_{|\beta|}\right)\int_{\mathbb R^n} \frac{1}{1+|x|^2}dx\\
    & \leq \tilde C M^{\tau,\sigma}_{|\alpha|} M^{\tau,\sigma}_{|\beta|}, \quad \alpha,\beta\in \mathbb N_0^n.
   \end{align*}
%   Finally, form $\widetilde{(M.2)}'$ there exists $A>0$ and corresponding $A'>0$ such that
 %  \begin{align*}
 %      |\xi^q\mathcal{F}(\varphi)^{(k)}(\xi)|& \leq \tilde C M^{\tau,\sigma}_{k} M^{\tau,\sigma}_{q}\left(1+ A^{(k+1)^\sigma+k^\sigma}\right)\\
 %      & \leq \tilde C M^{\tau,\sigma}_{k} M^{\tau,\sigma}_{q}\left(1+ A'^{k^\sigma}\right),
%   \end{align*}
 %  which with \eqref{bez h} imply that for any $\tau'>\tau>0$ there exists $\bar C>0$
%   \begin{align*}
%       |\xi^q\mathcal{F}(\varphi)^{(k)}(\xi)|& \leq \bar C M^{\tau',\sigma}_{k} M^{\tau,\sigma}_{q}\leq \bar C M^{\tau',\sigma}_{k} M^{\tau',\sigma}_{q}.
%   \end{align*}
So, $\mathcal{F} (\mathcal S_{(\mathcal{M}_\sigma)}^{(\mathcal{M}_\sigma)} (\mathbb R^n)) \subseteq \mathcal S_{(\mathcal{M}_\sigma)}^{(\mathcal{M}_\sigma)} (\mathbb R^n).$ The other inclusion can be obtained 
in the same way since $\mathcal{F}(\mathcal{F}\varphi)(x)=\varphi(-x).$ 
\end{proof}

\begin{Remark}
From the proof of Theorem \ref{Thm:invariant F}, it follows that
$\mathcal{F} (\mathcal S_{[\mathcal{M}_\sigma]})(\mathbb R^n)= \mathcal S^{[\mathcal{M}_\sigma]} (\mathbb R^n)$, and $\mathcal{F} (\mathcal S^{[\mathcal{M}_\sigma]}) (\mathbb R^n)= \mathcal S_{[\mathcal{M}_\sigma]} (\mathbb R^n).$
\end{Remark}

%%%%%%%%%%
%\subsection{Characterization of Gelfand-Shilov spaces}
%%%%%%%%%

The beautiful symmetric characterizations of 
Gelfand-Shilov spaces given in  \cite{Chung 1996} 
can be formulated in terms of the extended Gevrey regularity as follows. 

\begin{Theorem} \label{Thm:opisFurije}
Let $\sigma>1$,  and let the weight matrix $\mathcal{M}_\sigma$ be given by \eqref{eq:the-weight-matrix}, 
$ M_{p}^{\tau,\sigma}=p^{\tau p^\sigma}$, $ p\in \mathbb{N}$, $ M_0^{\tau,\sigma}=1$.
Then the following conditions are equivalent:
    \begin{enumerate}
        \item $\varphi\in \mathcal  S_{\{\mathcal{M}_\sigma\}}^{\{\mathcal{M}_\sigma\}} (\mathbb R^n);$
        \item $(\exists \tau>0)(\exists C>0)\;(\forall \alpha,\beta\in \mathbb N_0^n)$
        $$\sup_x|x^\alpha \varphi(x)|\leq C M^{\tau,\sigma}_{|\alpha|} \quad \text{and}\quad \sup_x|\partial^\beta \varphi(x)|\leq CM^{\tau,\sigma}_{|\beta|};$$
        \item $(\exists \tau>0)(\exists C>0)\;(\forall \alpha,\beta\in \mathbb N_0^n)$
        $$\sup_x|x^\alpha \varphi(x)|\leq C M^{\tau,\sigma}_{|\alpha|} \quad \text{and}\quad \sup_\xi|\xi^\beta \widehat\varphi(\xi)|\leq CM^{\tau,\sigma}_{|\beta|}.$$
    \end{enumerate}
\end{Theorem}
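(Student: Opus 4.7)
The plan is to prove the cycle (1)$\Rightarrow$(2), (1)$\Rightarrow$(3), (3)$\Rightarrow$(2), (2)$\Rightarrow$(1), with the last implication carrying all of the technical weight. The first two are immediate: setting $\beta=0$ (resp.\ $\alpha=0$) in the joint bound of (1) and using $M^{\tau,\sigma}_0=1$ yields (2); applying the Fourier invariance Theorem \ref{Thm:invariant F} to (1), we obtain $\hat\varphi \in \mathcal{S}_{\{\mathcal{M}_\sigma\}}^{\{\mathcal{M}_\sigma\}}(\mathbb R^n)$, and specialization then gives $\sup_\xi|\xi^\beta\hat\varphi(\xi)|\leq CM^{\tau,\sigma}_{|\beta|}$.

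For (3)$\Rightarrow$(2) I would use Fourier inversion,
\[
|\partial^\beta\varphi(x)|\leq (2\pi)^{|\beta|}\int_{\mathbb R^n}|\xi|^{|\beta|}|\hat\varphi(\xi)|\,d\xi,
\]
and split the $\xi$-integral at $|\xi|=1$. On $|\xi|>1$, condition (3) applied with index $|\beta|+n+1$ gives $|\hat\varphi(\xi)|\leq CM^{\tau,\sigma}_{|\beta|+n+1}/|\xi|^{|\beta|+n+1}$, so the tail integral is dominated by a constant multiple of $M^{\tau,\sigma}_{|\beta|+n+1}$. Iterating $\widetilde{(M.2)'}$ and invoking \eqref{eq:inequ-powers-of-sigma} together with Lemma \ref{lm1} (equivalently \eqref{jede h}), the index shift $|\beta|+n+1\mapsto|\beta|$ is absorbed at the price of enlarging $\tau$ to some $\tau'>\tau$, harmless in the Roumieu regime.

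The main step is (2)$\Rightarrow$(1). A direct Fourier-analytic route fails because controlling $\|\partial^\beta\varphi\|_{L^1}$ already requires the joint bounds we are trying to establish. Instead, I would invoke a Landau--Kolmogorov type local interpolation: for $\varphi\in C^\infty(\mathbb R^n)$, $x\in\mathbb R^n$ and $|\beta|\leq m$,
\[
|\partial^\beta\varphi(x)|\leq C_n\Bigl(\sup_{B(x,1)}|\varphi|\Bigr)^{1-|\beta|/m}\Bigl(\max_{|\gamma|=m}\sup_{B(x,1)}|\partial^\gamma\varphi|\Bigr)^{|\beta|/m}+C_n\sup_{B(x,1)}|\varphi|.
\]
For $|x|\geq 2$ take $m=2|\beta|$; since $B(x,1)\subset\{|y|\geq|x|/2\}$, condition (2) applied with the free index chosen as $2|\alpha|$ yields $\sup_{B(x,1)}|\varphi|\leq 2^{2|\alpha|}CM^{\tau,\sigma}_{2|\alpha|}/|x|^{2|\alpha|}$ and $\max_{|\gamma|=2|\beta|}\sup_{B(x,1)}|\partial^\gamma\varphi|\leq CM^{\tau,\sigma}_{2|\beta|}$. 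Multiplying the resulting estimate by $|x|^{|\alpha|}$ gives
\[
|x|^{|\alpha|}|\partial^\beta\varphi(x)|\leq C\,2^{|\alpha|}\,(M^{\tau,\sigma}_{2|\alpha|})^{1/2}(M^{\tau,\sigma}_{2|\beta|})^{1/2},
\]
plus a lower-order residual from the second summand that is handled by choosing a sufficiently large free index in (2). Using the identity $(M^{\tau,\sigma}_{2k})^{1/2}=2^{\tau 2^{\sigma-1}k^\sigma}M^{\tau 2^{\sigma-1},\sigma}_k$ and applying \eqref{jede h} twice (to absorb both $2^{\tau 2^{\sigma-1}k^\sigma}$ and $2^{|\alpha|}\leq 2^{|\alpha|^\sigma}$) produces $|x^\alpha\partial^\beta\varphi(x)|\leq C'M^{\tau',\sigma}_{|\alpha|}M^{\tau',\sigma}_{|\beta|}$ with $\tau'$ a fixed multiple of $\tau$. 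The case $|x|\leq 2$ is handled immediately from the second bound in (2) combined with $|x^\alpha|\leq 2^{|\alpha|}$.

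The main obstacle is precisely the parameter shift induced by the doubling: because $(M^{\tau,\sigma}_{2k})^{1/2}\neq M^{\tau,\sigma}_k$ in the extended Gevrey setting, one picks up the factor $2^{\sigma-1}$ (rather than $1$ as in the Gevrey case $\sigma=1$) inside the exponent, and this shift must be rerouted through $\widetilde{(M.2)'}$, \eqref{eq:inequ-powers-of-sigma}, and \eqref{jede h} before being absorbed by the Roumieu quantifier. Once (2)$\Rightarrow$(1) is established, the loop closes and all three conditions are equivalent.
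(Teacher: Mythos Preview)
Your implications (1)$\Rightarrow$(2), (1)$\Rightarrow$(3) and (3)$\Rightarrow$(2) are fine; for the last one the paper instead passes through the associated function $T_{\tau,\sigma}$ and a Cauchy--Schwarz splitting of $\int|\xi|^{|\beta|}e^{-T_{\tau,\sigma}(|\xi|)}\,d\xi$, but your direct index-shift argument is equally valid and of comparable length.

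The step (2)$\Rightarrow$(1) has a real gap. The local Landau--Kolmogorov inequality you state, with a constant $C_n$ depending \emph{only on the dimension}, is false on a bounded ball: the optimal constant necessarily grows with the orders $|\beta|$ and $m$. Already the Chebyshev polynomials $T_m$ on $[-1,1]$ force the constant to be at least of order $m$ when $|\beta|=1$, since $T_m'(1)=m^2$ while $\|T_m\|_\infty=1$ and $\|T_m^{(m)}\|_\infty^{1/m}\asymp m$. A correct quantitative version (Gorny, Cartan) gives a constant that is at most exponential in $m$; with $m=2|\beta|$ this produces an extra factor $A^{2|\beta|}\le A^{2|\beta|^\sigma}$, which can in principle be absorbed through \eqref{jede h}. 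So your route is salvageable, but it rests on a nontrivial quantitative interpolation estimate that you have hidden behind ``$C_n$'', and establishing that estimate in the required multivariate local form is not a formality.

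The paper sidesteps all of this by passing to the $L^2$ characterization (Theorem~\ref{thm:L2-and-Linfty}): one writes $\|x^\alpha\partial^\beta\varphi\|_{L^2}^{2}=\int(x^{2\alpha}\partial^\beta\varphi)\,\overline{\partial^\beta\varphi}\,dx$, integrates $\partial^\beta$ by parts onto the first factor, expands by Leibniz, and applies Cauchy--Schwarz to obtain a sum of products $\|\partial^{2\beta-\gamma}\varphi\|_{L^2}\,\|x^{2\alpha-\gamma}\varphi\|_{L^2}$, each controlled directly by condition~(2). This reproduces exactly your doubling $(M^{\tau,\sigma}_{2|\alpha|}M^{\tau,\sigma}_{2|\beta|})^{1/2}$ and the resulting shift $\tau\mapsto\tau 2^{\sigma}$, but with completely explicit constants and no interpolation inequality at all.
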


\begin{proof}
{\em 1.} $ \Rightarrow $ {\em 2.}, 
and {\em 1.} $ \Rightarrow $ {\em 3.}, follows immediately from   Definition \ref{Def:extGS} and Theorem \ref{Thm:invariant F}.

{\em 2.} $ \Rightarrow $ {\em 1.}
   By Theorem \ref{thm:L2-and-Linfty}, we can use $L^2$-norm instead of $L^\infty$-norm, cf. \eqref{eq:L2Roumieu}. Using integration by parts, the Leibniz formula and the Schwarz inequality we obtain that there exist $\tau>0$ and $C>0$ such that 
    \begin{align*}
        \|x^\alpha \partial^\beta\varphi(x)\|^2_{L^2} & =\int_{\mathbb R^n}(x^{2\alpha}\partial^\beta \varphi(x))\partial^\beta \varphi(x) dx\\
        &\leq \sum_{\gamma\leq \min\{2\alpha,\beta\}}\binom{\beta}{\gamma}\binom{2\alpha}{\gamma}\gamma!\|\partial^{2\beta-\gamma}\varphi(x)\|_{L^2}\|x^{2\alpha-\gamma}\varphi(x)\|_{L^2}\\
        &\leq \sum_{\gamma\leq \min\{2\alpha,\beta\}}\binom{\beta}{\gamma}\binom{2\alpha}{\gamma}\gamma!\,C\,M^{\tau,\sigma}_{|2\beta-\gamma|}\,C\,M^{\tau,\sigma}_{|2\alpha-\gamma|},\quad \alpha,\beta\in \mathbb N_0^n.
        \end{align*}
    Next, applying \eqref{M.1'} we get
    \begin{align*}
        \|x^\alpha \partial^\beta\varphi(x)\|^2_{L^2} &\leq C^2  M^{\tau,\sigma}_{|2\alpha|}M^{\tau,\sigma}_{|2\beta|} \sum_{\gamma\leq \min\{2\alpha,\beta\}}\binom{\beta}{\gamma}\binom{2\alpha}{\gamma}\frac{\gamma!}{(M^{\tau,\sigma}_{|\gamma|})^2}.
    \end{align*}
Then we use \eqref{gevr to us}, i.e. $\exists C'>0$ such that for any $\alpha,\beta\in \mathbb N_0^n$,
    \begin{align*}
        \sum_{\gamma\leq \min\{2\alpha,\beta\}}\binom{\beta}{\gamma}\binom{2\alpha}{\gamma}\frac{\gamma!}{(M^{\tau,\sigma}_{|\gamma|})^2}&\leq C' \sum_{\gamma\leq \min\{2\alpha,\beta\}}\binom{\beta}{\gamma}\binom{2\alpha}{\gamma}\\
        &\leq C' 2^{|2\alpha|+|\beta|} \leq C' 2^{|2\alpha|^\sigma+|2\beta|^\sigma},
    \end{align*}
so that
     \begin{align*}
        \|x^\alpha \partial^\beta\varphi(x)\|^2_{L^2} &\leq C^2  \,C' 2^{|2\alpha|^\sigma+|2\beta|^\sigma}\,M^{\tau,\sigma}_{|2\alpha|}M^{\tau,\sigma}_{|2\beta|}, \quad \alpha,\beta\in \mathbb N_0^n.
    \end{align*}
    Now, using $\widetilde{(M.2)},$  we obtain that there exists $A>0$ such that
    \begin{align*}
        \|x^\alpha \partial^\beta\varphi(x)\|^2_{L^2} &\leq C^2\,C' 2^{|2\alpha|^\sigma+|2\beta|^\sigma}\,  A^{2|\alpha|^\sigma} (M_{|\alpha|}^{\tau 2^{\sigma-1},\sigma})^2 A^{2|\beta|^\sigma} (M_{|\beta|}^{\tau 2^{\sigma-1},\sigma})^2 ,
    \\
    &\leq  C^2\,C' (2A)^{|2\alpha|^\sigma+|2\beta|^\sigma} (M_{|\alpha|}^{\tau 2^{\sigma-1},\sigma})^2 (M_{|\beta|}^{\tau 2^{\sigma-1},\sigma})^2, \quad \alpha,\beta\in \mathbb N_0^n.
    \end{align*}
   Finally, by \eqref{jede h} there exists $C''>0$ such that
     \begin{align*}
        \|x^\alpha \partial^\beta\varphi(x)\|_{L^2} &\leq \sqrt{C'} C ((2A)^{2^{\sigma-1}})^{|\alpha|^\sigma+|\beta|^\sigma} M_{|\alpha|}^{\tau 2^{\sigma-1},\sigma} M_{|\beta|}^{\tau 2^{\sigma-1},\sigma} \\
        &\leq \sqrt{C'} C C'' M_{|\alpha|}^{\tau 2^\sigma,\sigma} M_{|\beta|}^{\tau 2^\sigma,\sigma},\quad \alpha,\beta\in \mathbb N_0^n,
    \end{align*}
    which (with $\tilde C=\sqrt{C'} C C''$ and $\tilde \tau=\tau 2^\sigma$) implies \eqref{eq:L2Roumieu}.
%We get the statement for $C'=C\sqrt{C_1},$ $h'=(2hA)^{2^{\sigma-1}}$ and $\tau'=\tau 2^{\sigma-1}.$
    
It remains to prove  {\em 3.} $ \Rightarrow $ {\em 2.},
    
    Let $\tau>0$ and $C>0$ be as in 3. Then for any $\beta \in\mathbb N_0^n,$ the inequality $|\xi^\beta \widehat\varphi(\xi)|\leq CM_{|\beta|}^{\tau,\sigma},$ $\xi \in\mathbb R^n,$ implies 
    $$
    |\widehat\varphi(\xi)|\leq Ce^{-T_{\tau,\sigma}(|\xi|)},\quad \xi \in\mathbb R^n,
    $$ 
    where $T_{\tau,\sigma}$ 
    is the associated function to $(M_{\alpha}^{\tau,\sigma})_{\alpha\in\mathbb N_0^n}$, see 
    \eqref{eq:associated-function}.

By using the fact that
$ \displaystyle 
\int_{\mathbb R^n}e^{-\frac{1}{2}T_{\tau,\sigma}(|\xi|)}d\xi < \infty$ 
(the integrand decreases faster than $|\xi|^{-p}$ for any $p\in\mathbb N_0$), we get
\begin{align*}
    |\partial^\beta \varphi(x)| & \leq \int_{\mathbb R^n}|e^{2\pi ix\xi}\xi^{|\beta|}\widehat\varphi(\xi)|d\xi\\
    &\leq C \int_{\mathbb R^n}|\xi|^{|\beta|}e^{-T_{\tau,\sigma}(|\xi|)}d\xi\\
    & \leq C \sup_{\xi}[|\xi|^{2|\beta|}e^{-T_{\tau,\sigma}(|\xi|)}]^{1/2}\int_{\mathbb R^n}e^{-\frac{1}{2}T_{\tau,\sigma}(|\xi|)}d\xi\\
    & \leq C' \sup_{\xi}[|\xi|^{2|\beta|}e^{-T_{\tau,\sigma}(|\xi|)}]^{1/2}\\
    &\leq C' [M_{|2\beta|}^{\tau,\sigma}]^{1/2},\quad \beta\in\mathbb N_0^n.
\end{align*}
Finally, $\widetilde{(M.2)}$ and \eqref{jede h} yields
\begin{align*}
     |\partial^\beta \varphi(x)| & \leq C'
(A^{2|\beta|^\sigma} (M_{|\beta|}^{\tau 2^{\sigma-1},\sigma})^2)^{1/2} = C'A^{|\beta|^\sigma} M_{|\beta|}^{\tau 2^{\sigma-1},\sigma}\\
& \leq C'C'' M_{|\beta|}^{\tau2^\sigma,\sigma},\quad \beta\in\mathbb N_0^n,
\end{align*}
 which implies {\em 2.} (with $\tilde C= C' C''$ and $\tilde \tau=\tau 2^\sigma$).
%where we obtain statement for $C'=C_3,$ $h'=?=B^{2^{\sigma-1}}A$ and $\tau'=\tau 2^{\sigma-1}.$
\end{proof}

Theorem \ref{Thm:opisFurije} implies the following characterization in terms of the associated functions.

\begin{Corollary}
    Let $\sigma>1$, and let the weight matrix $\mathcal{M}_\sigma$ be given by \eqref{eq:the-weight-matrix}, 
$ M_{p}^{\tau,\sigma}=p^{\tau p^\sigma}$, $ p\in \mathbb{N}$, $ M_0^{\tau,\sigma}=1$.     
    The following conditions are equivalent:
    \begin{enumerate}
\item 
$\varphi\in  \mathcal S_{\{\mathcal{M}_\sigma\}}^{\{\mathcal{M}_\sigma\}} (\mathbb R^n);$
\item there exist $\tau>0$ such that
$$
        \sup_x|\varphi(x)| \exp T_{\tau,\sigma}(|x|)<\infty \quad \text{and}\quad \sup_\xi|\widehat\varphi(\xi)| \exp T_{\tau,\sigma}(|\xi|)<\infty,
$$
where $T_{\tau,\sigma}$ denotes the associated function to $(M_{\alpha}^{\tau,\sigma})_{\alpha\in\mathbb N_0^n}$ given in  
    \eqref{eq:associated-function}.
    \end{enumerate}
\end{Corollary}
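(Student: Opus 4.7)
The plan is to derive the corollary directly from Theorem \ref{Thm:opisFurije} by translating the polynomial bounds of condition 3 there into exponential decay bounds via the associated function, using the duality formula \eqref{KomatsuM1}: $M_p^{\tau,\sigma} = \sup_{|x|>0} |x|^p / e^{T_{\tau,\sigma}(|x|)}$. This formula is the standard Legendre-type bridge between weight sequences and their associated functions, and it encodes the equivalence between $\sup_p |x|^p / M_p^{\tau,\sigma} \leq C$ and $|x|^p \leq C M_p^{\tau,\sigma}$ for all $p$.

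For the direction \emph{1.} $\Rightarrow$ \emph{2.}, I would take $\varphi$ satisfying condition 3 of Theorem \ref{Thm:opisFurije}, so that $\sup_x |x^\alpha \varphi(x)| \leq C M_{|\alpha|}^{\tau,\sigma}$ and the analogous estimate for $\hat\varphi$ hold. Fixing $x$ and specializing $\alpha$ so that $|x^\alpha| \geq n^{-|\alpha|/2} |x|^{|\alpha|}$ (by choosing the largest coordinate), one obtains $|x|^p |\varphi(x)| \leq C n^{p/2} M_p^{\tau,\sigma}$ for every $p$. The factor $n^{p/2}$ is then absorbed using \eqref{jede h} (since $n^{p/2} \leq n^{p^{\sigma}/2}$ for $p \geq 1$, $\sigma > 1$), yielding $|x|^p |\varphi(x)| \leq C' M_p^{2\tau,\sigma}$ for all $p\geq 0$. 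Dividing and taking the supremum over $p$ in \eqref{eq:associated-function} (equivalently, using \eqref{KomatsuM1}), this reads $|\varphi(x)| \leq C' e^{-T_{2\tau,\sigma}(|x|)}$. The identical argument applied to $\hat\varphi$ delivers \emph{2.} with the parameter $2\tau$.

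For the converse \emph{2.} $\Rightarrow$ \emph{1.}, I would start from $|\varphi(x)| \leq C e^{-T_{\tau,\sigma}(|x|)}$ and $|\hat\varphi(\xi)| \leq C e^{-T_{\tau,\sigma}(|\xi|)}$. Multiplying by $|x|^{|\alpha|}$ and using $|x^\alpha| \leq |x|^{|\alpha|}$ gives $|x^\alpha \varphi(x)| \leq C |x|^{|\alpha|} e^{-T_{\tau,\sigma}(|x|)}$, and taking the supremum in $x$ together with \eqref{KomatsuM1} yields $\sup_x |x^\alpha \varphi(x)| \leq C M_{|\alpha|}^{\tau,\sigma}$. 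The same computation on the Fourier side gives $\sup_\xi |\xi^\beta \hat\varphi(\xi)| \leq C M_{|\beta|}^{\tau,\sigma}$. Thus condition 3 of Theorem \ref{Thm:opisFurije} holds, and that theorem provides $\varphi \in \mathcal S_{\{\mathcal M_\sigma\}}^{\{\mathcal M_\sigma\}}(\mathbb R^n)$.

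No real obstacle arises: the whole content of the corollary is that conditions 2 and 3 of Theorem \ref{Thm:opisFurije} are reformulations of each other via \eqref{KomatsuM1}. The only mild subtlety is that polynomial bounds involve the multi-index monomial $x^\alpha$ rather than the scalar power $|x|^{|\alpha|}$, so one must choose coordinates appropriately and absorb a factor $n^{|\alpha|/2}$ by slightly enlarging the parameter $\tau$, which is what \eqref{jede h} is designed for.
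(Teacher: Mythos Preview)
Your proposal is correct and follows exactly the approach the paper intends: the corollary is stated as an immediate consequence of Theorem \ref{Thm:opisFurije}, and you have correctly supplied the routine translation between condition 3 there and the associated-function decay via the definition of $T_{\tau,\sigma}$ and \eqref{KomatsuM1}, absorbing the dimensional factor $n^{|\alpha|/2}$ with \eqref{jede h}. The paper itself already performs the inference $|\xi^\beta\hat\varphi(\xi)|\leq CM^{\tau,\sigma}_{|\beta|}\Rightarrow|\hat\varphi(\xi)|\leq Ce^{-T_{\tau,\sigma}(|\xi|)}$ inside the proof of Theorem \ref{Thm:opisFurije} (step $3\Rightarrow2$), so your argument is precisely the intended one.
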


A similar theorem can be shown for the Beurling case by following the same proof ideas.

\begin{Theorem} \label{Thm:opisFurijeB}
    Let $\sigma>1,$ and let the weight matrix $\mathcal{M}_\sigma$ be given by \eqref{eq:the-weight-matrix}, 
$ M_{p}^{\tau,\sigma}=p^{\tau p^\sigma}$, $ p\in \mathbb{N}$, $ M_0^{\tau,\sigma}=1$. Furthermore, let $T_{\tau,\sigma}$ be the associated function to $(M_{\alpha}^{\tau,\sigma})_{\alpha\in\mathbb N_0^n}$, see    \eqref{eq:associated-function}.
    Then the  following conditions are equivalent:
    \begin{enumerate}
        \item $\varphi\in \mathcal  S_{(\mathcal{M}_\sigma)}^{(\mathcal{M}_\sigma)} (\mathbb R^n);$
        \item $(\forall \tau>0)(\exists C>0)\;(\forall \alpha,\beta\in \mathbb N_0^n)$
        $$\sup_x|x^\alpha \varphi(x)|\leq C M^{\tau,\sigma}_{|\alpha|} \quad \text{and}\quad \sup_x|\partial^\beta \varphi(x)|\leq CM^{\tau,\sigma}_{|\beta|};$$
        \item $(\forall \tau>0)(\exists C>0)\;(\forall \alpha,\beta\in \mathbb N_0^n)$
        $$\sup_x|x^\alpha \varphi(x)|\leq C M^{\tau,\sigma}_{|\alpha|} \quad \text{and}\quad \sup_\xi|\xi^\beta \widehat\varphi(\xi)|\leq CM^{\tau,\sigma}_{|\beta|};$$
        \item $(\forall \tau>0)$
        $$\sup_x|\varphi(x)| \exp T_{\tau,\sigma}(|x|)<\infty \quad \text{and}\quad \sup_\xi|\widehat\varphi(\xi)| \exp T_{\tau,\sigma}(|\xi|)<\infty.$$
    \end{enumerate}
\end{Theorem}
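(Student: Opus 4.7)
The plan is to follow the structure of the Roumieu proof of Theorem \ref{Thm:opisFurije}, systematically replacing the ``there exists $\tau$'' quantifier by ``for all $\tau$'' and carefully tracking how the hypothesis parameter must be shrunk to achieve a prescribed output parameter. The implications $1 \Rightarrow 2$ and $1 \Rightarrow 3$ are essentially free: setting $\alpha = 0$ (respectively $\beta = 0$) in Definition \ref{Def:extGS} yields the separate $L^\infty$ bounds in 2, and Theorem \ref{Thm:invariant F} combined with the standard identity $\widehat{\partial^\beta \varphi}(\xi) = (2\pi i \xi)^\beta \widehat\varphi(\xi)$ converts the derivative bound into one on $\xi^\beta \widehat\varphi$, giving 3.

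For $2 \Rightarrow 1$, I would reproduce the Roumieu $L^2$-integration-by-parts argument verbatim, observing that it uses only the separate bounds for a single fixed $\tau > 0$ and produces an $L^2$ bound in terms of $M^{\tau \cdot 2^\sigma,\sigma}$ after successive applications of the Leibniz and Schwarz inequalities, \eqref{M.1'}, \eqref{gevr to us}, $\widetilde{(M.2)}$, and \eqref{jede h}. To achieve the Beurling conclusion, given an arbitrary target $\tau' > 0$, I simply invoke hypothesis 2 with $\tau = \tau'/2^\sigma$; Theorem \ref{thm:L2-and-Linfty} then converts the $L^2$ bound into the required $L^\infty$ bound for statement 1. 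The implication $3 \Rightarrow 2$ adapts the Fourier inversion argument of the Roumieu proof in the same manner: starting from $|\widehat\varphi(\xi)| \leq C \exp(-T_{\tau,\sigma}(|\xi|))$ (which follows from the separate bound on $\xi^\beta \widehat\varphi$ via \eqref{KomatsuM1}) and estimating $|\partial^\beta \varphi(x)|$ by $\int |\xi|^{|\beta|}\,|\widehat\varphi(\xi)|\,d\xi$ using the integrability of $\exp(-\tfrac{1}{2}T_{\tau,\sigma}(|\xi|))$, one obtains a bound by $M^{\tau \cdot 2^\sigma,\sigma}_{|\beta|}$, and the $\forall \tau'$ clause is handled by shrinking $\tau$ accordingly.

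Finally, the equivalence $3 \Leftrightarrow 4$ is a direct consequence of the Komatsu identity \eqref{KomatsuM1}: from $|\widehat\varphi(\xi)| \leq C \exp(-T_{\tau,\sigma}(|\xi|))$ one deduces $|\xi^\beta \widehat\varphi(\xi)| \leq C M^{\tau,\sigma}_{|\beta|}$ using $|\xi^\beta| \leq |\xi|^{|\beta|}$; conversely, given the separate sup bounds, for any $\xi \neq 0$ choose a coordinate $j$ with $|\xi_j| \geq |\xi|/\sqrt n$, so that $|\xi|^p \leq n^{p/2} |\xi^{p e_j}|$ and hence $|\widehat\varphi(\xi)| \leq C \inf_p n^{p/2} M^{\tau,\sigma}_p / |\xi|^p$, which equals $C' \exp(-T_{\tilde\tau,\sigma}(|\xi|))$ after absorbing the factor $n^{p/2}$ via \eqref{jede h}; the identical argument applies to $\varphi$ itself. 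The main bookkeeping obstacle throughout is to ensure that each intermediate parameter inflation (by factors such as $2^\sigma$ or by constants arising from \eqref{jede h}) is compensated by initializing the hypothesis with a sufficiently small $\tau$; this is always possible precisely because in the Beurling setting hypotheses 2, 3, and 4 are quantified over all $\tau > 0$.
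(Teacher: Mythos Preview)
Your proposal is correct and follows essentially the same approach as the paper: the paper itself gives no separate proof for the Beurling case but simply states that it ``can be shown for the Beurling case by following the same proof ideas'' as in the Roumieu Theorem~\ref{Thm:opisFurije}, and your proposal is precisely that adaptation, with the appropriate replacement of quantifiers and the observation that any inflation of $\tau$ by a fixed factor (such as $2^\sigma$) can be compensated by shrinking the hypothesis parameter. The only superfluous remark is the appeal to the identity $\widehat{\partial^\beta\varphi}=(2\pi i\xi)^\beta\widehat\varphi$ in $1\Rightarrow 3$; once Theorem~\ref{Thm:invariant F} gives $\widehat\varphi\in\mathcal S_{(\mathcal M_\sigma)}^{(\mathcal M_\sigma)}$, the bound on $\xi^\beta\widehat\varphi$ follows directly from the definition with derivative index zero.
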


\begin{Remark}
   Part of the proof of Theorems \ref{Thm:opisFurije} and \ref{Thm:opisFurijeB} (when showing that {\em  2.} 
   implies {\em 1.}, establishes the inclusion $\mathcal  S_{[\mathcal{M}_\sigma]} (\mathbb R^n) \cap \mathcal  S^{[\mathcal{M}_\sigma]} (\mathbb R^n) \subseteq \mathcal  S_{[\mathcal{M}_\sigma]}^{[\mathcal{M}_\sigma]} (\mathbb R^n).$ This, together with \eqref{eq:incl}, this implies that $\mathcal  S_{[\mathcal{M}_\sigma]} \cap \mathcal  S^{[\mathcal{M}_\sigma]} (\mathbb R^n) = \mathcal  S_{[\mathcal{M}_\sigma]}^{[\mathcal{M}_\sigma]} (\mathbb R^n).$
\end{Remark}

%%%%%%
%\subsection{Nuclearity}
%%%%%%

%%%%%%
\section{Time-frequency representations}  \label{Sec4}
%%%%%%

The Fourier transform of $f \in L^2 (\mathbb R^n)$ provides  information about its global frequency content. Different representations of $f$ in phase space or time-frequency plane are used to obtain a localized version of the Fourier transform. Notable examples are the short-time Fourier transform, the Wigner transform and Cohen's class representations such as the Born-Jordan transform.

In this section we turn our attention to the Grossmann-Royer operators which originated from the problem of the
physical interpretation of the Wigner distribution, see \cite{Grossmann, Royer}.
It tuned out that the Grossmann-Royer operators 
are closely related to the Heisenberg operators, well-known objects from quantum mechanics, see \cite{deGosson2011, deGosson2017}.

The  Grossmann-Royer operator $ R : L^2 (\mathbb{R}^n) \rightarrow L^2 (\mathbb{R}^{2n})  $ is given by
\begin{equation} \label{GROp}
R f (x,\omega) =  R  (f(t) )(x,\omega)  = e^{4\pi i\omega (t-x)} f(2x - t), \;\;\; f \in L^2 (\mathbb{R}^n),\; x,\omega \in \mathbb{R}^n.
\end{equation}

We define the Grossmann-Royer transform $R_g f$ 
as the weak sense interpretation of the Grossmann-Royer operator.
The Grossmann-Royer transform is essentially the cross-Wigner distribution  $ W(f,g)$
(see Definition \ref{transformacije}, Lemma  \ref{GRandRelatives}
and \cite[Definition 12]{deGosson2017}), and is also closely related to  $V_g f$ and $A(f,g),$ the short-time Fourier transform and the cross-ambiguity transform respectively, see  Definition \ref{transformacije}.

\begin{Definition} \label{transformacije}
Let there be given $f, g \in L^2 (\mathbb{R}^n).$ The
Grossmann-Royer transform of $f$ and $g$ is given by
\begin{equation} \label{GRT}
R_g f (x,\omega) = R (f,g) (x,\omega)  = \langle R f, g \rangle   = \int e^{4\pi i  \omega(t-x)} f(2x- t) \overline{ g(t)} dt, \;\;\; x,\omega \in \mathbb{R}^n.
\end{equation}
The short-time Fourier transform of
$ f  $ with respect to the window $g$  is given by
\begin{equation} \label{GT}
V_g f (x,\omega) = \int e^{-2\pi i t \omega} f(t)  \overline{ g(t-x)}dt, \;\;\; x,\omega \in \mathbb{R}^n.
\end{equation}
The cross--Wigner distribution of  $f$ and $g$ is
\begin{equation} \label{WD}
W(f,g) (x,\omega)  = \int e^{-2\pi i \omega t} f(x+ \frac{t}{2})
\overline{ g(x- \frac{t}{2})} dt, \;\;\; x,\omega \in \mathbb{R}^n,
\end{equation}
and the cross--ambiguity function of  $f$ and $g$ is
\begin{equation} \label{FWT}
A (f,g) (x, \omega) = \int e^{-2\pi i \omega t}
f(t+ \frac{x}{2}) \overline{ g(t- \frac{x}{2})} dt, \;\;\; x,\omega \in \mathbb{R}^n.
\end{equation}
\end{Definition}

The transforms in Definition \ref{transformacije} are called 
time-frequency representations and we set
$$ 
TFR = \{ R_g f, V_g f, W (f,g), A(f,g)\}.
$$
 
Note that $ R  (f(t) )(x,\omega)  = e^{-4\pi i\omega x}  M_{2\omega}(T_{2x} f)^{\check{}} (t) $ so that
\begin{equation} \label{GRT-trans-mod}
R_g f (x,\omega)  =  e^{-4\pi i\omega x} \langle M_{2\omega} (T_{2x} f)^{\check{}} , g \rangle,
\end{equation}
where translation and modulation operators are respectively given by
$$
T_x f(\cdot) = f(\cdot - x) \;\;\; \mbox{ and } \;\;\;
 M_x f(\cdot) = e^{2\pi i x \cdot} f(\cdot), \;\;\; x \in \mathbb{R}^n,
$$
and $ \check{f} $ denotes the reflection $\check{f} (x) = f (-x)$.

By using appropriate change of variables the following relations between the time-frequency representations can be obtained (cf. \cite{Teo2018}).

\begin{Lemma} \label{GRandRelatives}
Let  $f, g \in L^2 (\mathbb{R}^n).$ Then we have:
\begin{eqnarray*}
W(f,g) (x,\omega)  & = & 2^n R_g f (x,\omega), \\
V_g f (x,\omega) & = &  e^{-\pi i x \omega} R_{\check{g}} f \left(\frac{x}{2},\frac{\omega}{2}\right),
\\
A(f,g) (x,\omega) & = & R_{\check{g}} f \left(\frac{x}{2},\frac{\omega}{2}\right), \quad
x,\omega \in \mathbb{R}^n.
\end{eqnarray*}
\end{Lemma}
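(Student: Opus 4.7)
The proof is a direct computation for each identity, obtained from appropriate linear changes of variables in the defining integrals; no deep machinery is required. The plan is to reduce all three claims to transparent substitutions, keeping careful track of the phase factors that arise from dilations and reflections.

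For identity (1), I would start from $R_g f(x,\omega)$ as given by \eqref{GRT} and perform the substitution $u=2x-t$, which maps $t-x$ to $x-u$ and gives
$$
R_g f(x,\omega)=\int e^{4\pi i\omega x}\,e^{-4\pi i\omega u} f(u)\,\overline{g(2x-u)}\,du.
$$
On the Wigner side, starting from \eqref{WD} and substituting $s=x+t/2$ (so $dt=2^n\,ds$ in $n$ variables, and $x-t/2=2x-s$) produces exactly the same integral multiplied by $2^n$. Comparing yields $W(f,g)(x,\omega)=2^n R_g f(x,\omega)$.

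For identity (3), I plug $g\mapsto\check g$ and $(x,\omega)\mapsto(x/2,\omega/2)$ into \eqref{GRT}:
$$
R_{\check g}f(x/2,\omega/2)=\int e^{2\pi i\omega(t-x/2)} f(x-t)\,\overline{g(-t)}\,dt,
$$
and then substitute $u=-t$ to obtain
$$
R_{\check g}f(x/2,\omega/2)=e^{-\pi i\omega x}\int e^{-2\pi i\omega u} f(x+u)\,\overline{g(u)}\,du.
$$
Independently, the substitution $s=t-x/2$ in \eqref{FWT} rewrites
$$
A(f,g)(x,\omega)=e^{-\pi i\omega x}\int e^{-2\pi i\omega s} f(s+x)\,\overline{g(s)}\,ds,
$$
which matches the previous expression, establishing $A(f,g)(x,\omega)=R_{\check g}f(x/2,\omega/2)$.

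For identity (2), the cleanest route is to relate $V_g f$ to $A(f,g)$ and then invoke (3). Starting from \eqref{FWT}, the substitution $s=t+x/2$ yields
$$
A(f,g)(x,\omega)=e^{\pi i\omega x}\int e^{-2\pi i\omega s} f(s)\,\overline{g(s-x)}\,ds=e^{\pi i\omega x}V_g f(x,\omega),
$$
hence $V_g f(x,\omega)=e^{-\pi i x\omega}A(f,g)(x,\omega)=e^{-\pi i x\omega}R_{\check g}f(x/2,\omega/2)$ by (3). The main place where care is needed is the bookkeeping of the phase factors $e^{\pm\pi i x\omega}$ and $e^{\pm 4\pi i\omega x}$ produced by the rescalings $\omega\mapsto\omega/2$, $x\mapsto x/2$ and by the reflection $g\mapsto\check g$; aligning these precisely is the only real source of potential error, and the rest is a routine change of variables valid for $f,g\in L^2(\mathbb R^n)$ (and, by standard density, in the distributional pairings).
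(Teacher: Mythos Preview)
Your proof is correct and follows precisely the approach indicated in the paper, which states that the identities are obtained ``by using appropriate change of variables'' (cf.\ \cite{Teo2018}) without spelling out the details. Your substitutions $u=2x-t$, $s=x+t/2$, $u=-t$, and $s=t\pm x/2$ are exactly the right ones, and your phase bookkeeping is accurate.
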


%From now on, we consider the Grossmann-Royer operator,
%and note that, 
%by Lemma  \ref{GRandRelatives}, 
%similar claims can be proved for other 
%time-frequency representations.

The Grossmann-Royer operator is self-adjoint, uniformly continuous on
$\mathbb{R}^{2n}$, and the following properties hold:
\begin{enumerate}
\item $ \| R_g f \|_\infty \leq \| f\| \|g\|$;
\item $  R_g f = \overline{R_f g}$;
\item $  R_{ \hat g} \hat f ( x, \omega) =  R_{ g} f (-\omega, x),$
$ x,\omega \in  \mathbb{R}^n$;
\item For $ f_1, f_2, g_1, g_2  \in L^2 (\mathbb{R}^d),$ the Moyal identity holds:
$$ \langle R_{g_1} f_1, R_{ g_2} f_2 \rangle = \langle f_1, f_2 \rangle \overline{\langle g_1, g_2 \rangle}. $$
\end{enumerate}

The Moyal identity formula implies the inversion formula:
\begin{equation} \label{eq:inversion}
f = \frac{1}{\langle g_2, g_1 \rangle} \iint  R_{g_1} f (x, \omega)
R g_2 (x,\omega) \, dx d\omega.
\end{equation}
We refer to \cite{Gro} for details (explained in terms of the short-time Fourier transform).

Clearly, a smooth function $ F $ belongs to
$ \mathcal S_{(\mathcal{M}_\sigma)}^{(\mathcal{M}_\sigma)} 
(\mathbb R^{2n}) $ if
\begin{equation} \label{eq:doubledimension}
 (\forall \tau>0) (\exists C>0)
 \quad
   \sup_{x, \omega}
   |x^{\alpha_1} \omega^{\alpha_2} 
   \partial_x ^{\beta_1}   \partial_\omega ^{\beta_2} 
   F(x, \omega)|
   \leq C
   M^{\tau,\sigma}_{|\alpha_1 + \alpha_2|}  
   M^{\tau,\sigma}_{|\beta_1 + \beta_2|},
\end{equation}
for all $
\alpha_1, \alpha_2,\beta_1, \beta_2 \in\mathbb N_0^n$,
and similarly for $ \mathcal S_{\{\mathcal{M}_\sigma \}}^{\{\mathcal{M}_\sigma\}} 
(\mathbb R^{2n}) $.

 If $ F \in  \mathcal S_{[\mathcal{M}_\sigma]}^{[\mathcal{M}_\sigma]}  (\mathbb R^{2n}) $, then by calculations in Section \ref{Sec3}  it follows that $\mathcal F_1 F, \mathcal F_2 F  \in  \mathcal S_{[\mathcal{M}_\sigma]}^{[\mathcal{M}_\sigma]}  (\mathbb R^{2n}) $, where  $\mathcal F_1 $ and  
  $\mathcal F_2 $ denote partial Fourier transforms with respect to $x$ and $\omega ,$ respectively.

The following theorem, in the context of classical Gelfand-Shilov spaces, is considered folklore (see, for example,   \cite{GZ, T2, Teo2018, Toft-2012}).

\begin{Theorem} \label{nec-suf-cond}
Let $\sigma>1$, and let the weight matrix $\mathcal{M}_\sigma$ be given by 
\eqref{eq:the-weight-matrix}, 
$ M_{p}^{\tau,\sigma}=p^{\tau p^\sigma}$, $ p\in \mathbb{N}$, $ M_0^{\tau,\sigma}=1$.
If $f,g \in    \mathcal S_{[\mathcal{M}_\sigma ]}^{[\mathcal{M}_\sigma]} (\mathbb R^{n}) $, 
and  $ T \in TFA$, then $ T   \in   \mathcal S_{[\mathcal{M}_\sigma ]}^{[\mathcal{M}_\sigma]} (\mathbb R^{2n})$.

Conversely, if  $ T \in TFA$ and $ T   \in   \mathcal S_{[\mathcal{M}_\sigma ]}^{[\mathcal{M}_\sigma]} (\mathbb R^{2n})$
then $f,g \in     \mathcal S_{[\mathcal{M}_\sigma ]}^{[\mathcal{M}_\sigma]} (\mathbb R^{n}).$
\end{Theorem}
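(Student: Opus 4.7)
First, by Lemma \ref{GRandRelatives}, the four time-frequency representations in $TFR$ coincide up to a rescaling of arguments by $1/2$, a reflection $g\mapsto\check g$, and multiplication by the phase $e^{-\pi i x\omega}$. Each of these operations preserves $\mathcal S_{[\mathcal{M}_\sigma]}^{[\mathcal{M}_\sigma]}(\mathbb R^{2n})$ (the phase factor by a Leibniz computation analogous to that in the proof of Theorem \ref{Thm:invariant F}, combined with $\widetilde{(M.2)}$ and \eqref{jede h}), so it suffices to treat a single representative. I choose the short-time Fourier transform $V_g f$.

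For the forward direction, write $V_g f=\mathcal F_2 F$, where $F(x,t)=f(t)\overline{g(t-x)}$ and $\mathcal F_2$ is the partial Fourier transform in the second variable. The first step is to show $F\in\mathcal S_{[\mathcal{M}_\sigma]}^{[\mathcal{M}_\sigma]}(\mathbb R^{2n})$. The derivative $\partial_x^{\beta_1}$ hits only $g(t-x)$, and $\partial_t^{\beta_2}$ is distributed by Leibniz. The polynomial weight $x^{\alpha_1}t^{\alpha_2}$ is decoupled by writing $x=t-(t-x)$, expressing
\begin{equation*}
x^{\alpha_1}t^{\alpha_2}\partial_x^{\beta_1}\partial_t^{\beta_2}F(x,t)
\end{equation*}
as a finite sum of products $\bigl[t^\mu f^{(\mu')}(t)\bigr]\cdot\bigl[(t-x)^\nu g^{(\nu')}(t-x)\bigr]$ with $|\mu|+|\nu|=|\alpha_1|+|\alpha_2|$ and $|\mu'|+|\nu'|=|\beta_1|+|\beta_2|$. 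Each bracket is bounded via the Gelfand--Shilov hypothesis on $f$ and $g$ (with a smaller parameter), \eqref{M.1'} recombines the pieces into $M^{\tau,\sigma}_{|\alpha_1|+|\alpha_2|}M^{\tau,\sigma}_{|\beta_1|+|\beta_2|}$, and the binomial factors $\lesssim 2^{|\alpha_1|+|\beta_2|}$ are absorbed by \eqref{jede h} after the required parameter shift. Applying $\mathcal F_2$ then gives $V_g f\in\mathcal S_{[\mathcal{M}_\sigma]}^{[\mathcal{M}_\sigma]}(\mathbb R^{2n})$, since the partial Fourier transform preserves the class (the full Fourier transform does so by Theorem \ref{Thm:invariant F} applied in dimension $2n$, and $\mathcal F_2=\mathcal F_1^{-1}\mathcal F$).

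For the converse, fix a nonzero $\eta\in\mathcal S_{[\mathcal{M}_\sigma]}^{[\mathcal{M}_\sigma]}(\mathbb R^n)$ (e.g.\ a Gaussian) with $\langle\eta,g\rangle\neq 0$, and invoke the STFT inversion formula
\begin{equation*}
f(t)=\frac{1}{\langle\eta,g\rangle}\iint V_g f(x,\omega)\,e^{2\pi i t\omega}\,\eta(t-x)\,dx\,d\omega.
\end{equation*}
Differentiating under the integral, distributing $\partial_t^\beta$ by Leibniz between $e^{2\pi i t\omega}$ and $\eta(t-x)$, and splitting $t^\alpha=((t-x)+x)^\alpha$, one reduces $t^\alpha\partial_t^\beta f(t)$ to a finite linear combination of integrals of the form $\iint V_g f(x,\omega)\,\omega^{\gamma}x^\delta(t-x)^{\alpha-\delta}\eta^{(\beta-\gamma)}(t-x)e^{2\pi i t\omega}\,dx\,d\omega$. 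The hypothesis $V_g f\in\mathcal S_{[\mathcal{M}_\sigma]}^{[\mathcal{M}_\sigma]}(\mathbb R^{2n})$ simultaneously provides rapid decay (yielding an auxiliary integrability factor $(1+|x|^2+|\omega|^2)^{-N}$) and the polynomial-growth bound $|x^\delta\omega^{\gamma}V_g f|\lesssim M^{\tau,\sigma}_{|\delta|+|\gamma|}$, while the Gelfand--Shilov estimates on $\eta$ handle the factor $(t-x)^{\alpha-\delta}\eta^{(\beta-\gamma)}(t-x)$. Recombining via \eqref{M.1'} and absorbing binomial factors through \eqref{jede h} produces the required $|t^\alpha\partial_t^\beta f(t)|\lesssim M^{\tau',\sigma}_{|\alpha|}M^{\tau',\sigma}_{|\beta|}$. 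The analogous argument with $f$ and $g$ interchanged (via the symmetry $V_g f(x,\omega)=e^{-2\pi i x\omega}\overline{V_f g(-x,-\omega)}$) then yields $g\in\mathcal S_{[\mathcal{M}_\sigma]}^{[\mathcal{M}_\sigma]}(\mathbb R^n)$.

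The main obstacle, as elsewhere in the paper, is the failure of Komatsu's $(M.2)$ for $(p^{\tau p^\sigma})_p$: every use of its weakened replacement $\widetilde{(M.2)}$ introduces the non-standard growth factor $C^{p^\sigma+q^\sigma}$ together with a parameter shift $\tau\mapsto\tau 2^{\sigma-1}$. Controlling the accumulation of these factors forces repeated applications of Lemma \ref{lm1} (equivalently \eqref{jede h}) to absorb them into $M^{\tau',\sigma}$ at a suitably larger parameter $\tau'$; in the Beurling case this is arranged by starting from a sufficiently small initial parameter, while in the Roumieu case only finitely many shifts occur so that a final admissible parameter exists.
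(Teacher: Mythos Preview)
Your proof is correct and follows essentially the same strategy as the paper: reduce to one representative via Lemma \ref{GRandRelatives}, express it as a partial Fourier transform of a tensor-like product, establish membership in $\mathcal S_{[\mathcal{M}_\sigma]}^{[\mathcal{M}_\sigma]}(\mathbb R^{2n})$ via Leibniz together with \eqref{M.1'}, $\widetilde{(M.2)}$ and \eqref{jede h}, and handle the converse through the inversion formula with an auxiliary window in the space. The only differences are cosmetic---you work with $V_g f$ where the paper uses $R_g f$, and your justification $\mathcal F_2=\mathcal F_1^{-1}\mathcal F$ for partial Fourier invariance is slightly roundabout (the paper simply invokes the computations behind Theorem \ref{Thm:invariant F} directly).
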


\begin{proof}
Since $\mathcal S_{[\mathcal{M}_\sigma ]}^{[\mathcal{M}_\sigma]} (\mathbb R^{n})$
are closed under reflections, dilations and modulations, then by Lemma \ref{GRandRelatives}
it suffices to give the proof for any $T \in TFA$.
We will give the proof for the Grossmann-Royer transform, and the same conclusion holds for other time-frequency representations in question. 

\par

Put $ \Phi (x,t) := f(2x -t) g (t)$, $x,t \in \mathbb R^{n}$.
Then 
$$
R_g f (x,\omega) = e^{-4\pi i \omega x}  \int e^{2\pi i \omega (2t) } \Phi (x,t)\, dt. 
$$
The first part of the claim will be proved 
if we show that
$ \Phi \in \mathcal S_{[\mathcal{M}_\sigma ]}^{[\mathcal{M}_\sigma]} (\mathbb R^{2n}) $,
since $  \mathcal S_{[\mathcal{M}_\sigma ]}^{[\mathcal{M}_\sigma]} (\mathbb R^{2n})$ is closed under dilations and modulations,
and since then $\mathcal F_2 \Phi \in 
  \mathcal S_{[\mathcal{M}_\sigma ]}^{[\mathcal{M}_\sigma]} (\mathbb R^{2n})$.
  
As before, we give the proof for the Beurling case, and the 
Roumieu case follows by similar arguments.

By Theorem \ref{Thm:opisFurije} it is enough to show
\begin{equation} \label{varphi po x}
\sup_{x,t \in \mathbb{R}^n}  
| x^{\alpha}  t^\beta  \Phi (x,t) | \leq C 
M^{\tau, \sigma} _{|\alpha|} M^{\tau, \sigma} _{|\beta|},\quad \alpha,\beta\in\mathbb N_0^n,
\end{equation}
and
\begin{equation} \label{varphi po ksi}
\sup_{x,t \in \mathbb{R}^n}  
|  \partial^\alpha _x \partial^\beta _t  \Phi (x,t) |
\leq C M^{\tau, \sigma} _{|\alpha|} M^{\tau, \sigma} _{|\beta|},\quad \alpha,\beta\in\mathbb N_0^n,
\end{equation}
for any given $\tau > 0$, and some constant $C= C_\tau >0$.

The decay of  $ \Phi (x,t) := f(2x -t) g (t)$ can be estimated as follows.
%%%%%%%
\begin{align}
\sup_{x,t \in \mathbb{R}^d}  | x^{\alpha}  t^\beta  f(2x-t) g (t) |
& \leq  
2^{-|\alpha|}\sup_{y,t \in \mathbb{R}^n}  |y+t|^{|\alpha|}  |t|^{|\beta|} 
|f(y) | |  g (t) | 
\nonumber \\
&\leq C 
2^{-|\alpha|}\sup_{y,t \in \mathbb{R}^n }
\sum_{\gamma \leq \alpha} \binom{\alpha}{\gamma}
|y|^{|\alpha|}  |t|^{|\alpha - \gamma| + |\beta|} 
|f(y) | |  g (t) | 
\nonumber \\
&\leq C  
\sup_{y \in \mathbb{R}^n} |y|^{|\alpha|}  |f(y) | \cdot
\sup_{t \in \mathbb{R}^n} |t|^{|\alpha| + |\beta|} 
 |  g (t) | 
\nonumber \\
& \leq
C  M^{\tau/2,\sigma}_{|\alpha |}  
M^{\tau/2,\sigma}_{|\alpha | +|\beta|} 
%\leq C  M^{\tau/2,\sigma}_{|\alpha |}  M^{\tau/2,\sigma}_{|\alpha | +|\beta|} 
\nonumber \\
& \leq
C  M^{\tau,\sigma}_{|\alpha | + |\beta|}, 
\label{eq:est-mult}
\end{align}
see also Remark \ref{rem:alfa-beta}.

%%%%%%

For the proof of  \eqref{varphi po ksi}, we use the Leibniz formula
and properties of the sequence $(M_{p}^{\tau,\sigma})_{p\in\mathbb N_0}$:

\begin{eqnarray*}
 \sup_{x,t \in \mathbb{R}^n} |\partial^\alpha _x \partial^\beta _t   f(2x-t) g (t)|
& \leq &
 2^{|\alpha|} \sum_{\gamma \leq \beta}
  \binom{\beta}{\gamma} 
\sup_{x,t \in \mathbb{R}^n} |\partial^{\alpha} _x \partial^{\gamma} _t  f (2x - t)
 \partial^{\beta - \gamma} _t g  (t)|
\\
& \leq & 2^{|\alpha| }  \sum_{\gamma \leq \beta}
  \binom{\beta}{\gamma} 
\sup_{x,t \in \mathbb{R}^n}
| \partial^{\alpha} _x \partial^{\gamma} _t  f (2x - t)|
\sup_{t \in \mathbb{R}^n} |\partial^{\beta - \gamma} _t g  (t)|
\\
& \leq & 2^{|\alpha| +|\beta|} 
C M^{\tau/4,\sigma}_{|\alpha |  +|\beta|}  
M^{\tau/4,\sigma}_{|\beta|} 
\leq  C_1 M^{\tau,\sigma}_{|\alpha |  +|\beta|}, 
\end{eqnarray*}
for some constants $C, C_1 > 0 $.

Thus $ R_g f \in \mathcal S_{(\mathcal{M}_\sigma )}^{(\mathcal{M}_\sigma)} (\mathbb R^{2n}) $ if  $f,g \in    \mathcal S_{(\mathcal{M}_\sigma )}^{(\mathcal{M}_\sigma)} (\mathbb R^{n}) .$

%Therefore, $ \varphi \in \in \mathcal S_{[\mathcal{M}_\sigma ]}%^{[\mathcal{M}_\sigma]} (\mathbb R^{2n}) $ and, consequently,
%$ R_g f (x,\o) \in  
%\mathcal S_{[\mathcal{M}_\sigma ]}^{[\mathcal{M}_\sigma]} (\mathbb %R^{2n}) $.

Now, assume that 
\begin{equation} \label{eq:RinGSspace}
R_g f \in  
\mathcal S_{(\mathcal{M}_\sigma )}^{(\mathcal{M}_\sigma)} (\mathbb R^{2n}) 
\end{equation}
for a given $g \in L^2 (\mathbb R^{2n}) \setminus \{ 0\}$.

By the inversion formula \eqref{eq:inversion}
we have
\begin{equation} \label{eq:term-for-estimate}
t^\alpha \partial^\beta _t f (t) = \frac{1}{\langle h, g \rangle} 
\iint R_{g} f (x, \omega)
t^\alpha \partial^\beta _t (R (h (t)) (x, \omega)\, dx d\omega,
\end{equation}
where we may choose $h \in \mathcal S_{(\mathcal{M}_\sigma )}^{(\mathcal{M}_\sigma)} (\mathbb R^{n})$ such that 
$\langle h, g \rangle = 1$.

Let there be given $\tau >0$.

From \eqref{eq:RinGSspace} we get
\begin{eqnarray}
% \sup_{t \in \mathbb{R}^n} 
 | t^\alpha \partial^\beta _t (R (h (t)) (x, \omega) |
&  = &
%  \sup_{t \in \mathbb{R}^n} 
| t^\alpha \partial^\beta _t  ( e^{4\pi i \omega (t-x)} h(2x-t))|
\nonumber \\
&  \leq &
C \sum_{\gamma \leq \beta} \binom{\beta}{\gamma}
|\omega^{\beta - \gamma} |
 | t^{\alpha}  \partial^\gamma _t h(2x-t))| 
 \nonumber \\
&  \leq &
C M^{\tau/2, \sigma} _{|\alpha|} M^{\tau/2, \sigma} _{|\beta|} 
\sum_{\gamma \leq \beta} \binom{\beta}{\gamma}
|\omega^{\beta - \gamma} |, 
\nonumber
%\label{eq:Restimate-1}
\end{eqnarray}
wherefrom
\begin{equation} \label{eq:term-for-estimate-2}
|t^\alpha \partial^\beta _t f (t) |
\leq C
M^{\tau/2, \sigma} _{|\alpha|} M^{\tau/2, \sigma} _{|\beta|} 
\sum_{\gamma \leq \beta} \binom{\beta}{\gamma}
\iint |\omega^{\beta - \gamma}  R_{g} f (x, \omega)|
dx d\omega.
\end{equation}
Next we estimate the double integral in \eqref{eq:term-for-estimate-2}. We use  similar argument as in the proof of Theorem \ref{thm:L2-and-Linfty} (cf. \eqref{eq:Linfty-implies-L2}):
\begin{align}
\iint |\omega^{\beta - \gamma}  R_{g} f (x, \omega) & |
 dx d\omega \nonumber \\
  & \leq 
C_1 \sup_{x,\omega \in \mathbb R^{n}} (1+|x|)^2
(1+|\omega|)^{2+|\beta - \gamma|} |R_{g} f (x, \omega)|
\nonumber \\
  & \leq 
C_2    M^{\tau/4, \sigma} _{2} M^{\tau/4, \sigma} _{2+|\beta -\gamma|}  \leq C_3 M^{\tau/4, \sigma} _{2+|\beta|} 
\nonumber \\
  & \leq 
C_4 ^{(2^{\sigma - 1} +1) |\beta|^\sigma} 
 M^{\tau/4, \sigma} _{|\beta|} 
 \nonumber
 % \label{eq:Restimate-2},
\end{align}
for some constants $ C_j >0,$ $ j =1,2,3,4$, wherefrom
\begin{equation}
\sum_{\gamma \leq \beta} \binom{\beta}{\gamma}
\iint |\omega^{\beta - \gamma}  R_{g} f (x, \omega)|
dx d\omega
\leq 
C_5 ^{(2^{\sigma - 1} +1) |\beta|^\sigma} 
M^{\tau/4, \sigma} _{|\beta|} 
\leq
 C M^{\tau/2, \sigma} _{|\beta|},
  \label{eq:Restimate-2}
\end{equation}
for some constants $C, C_5 > 0$.
By \eqref{eq:term-for-estimate-2} and 
\eqref{eq:Restimate-2} we get
$$
\sup_{t \in \mathbb R^{n}}
|t^\alpha \partial^\beta _t f (t) | \leq
C 
M^{\tau/2, \sigma} _{|\alpha|} M^{\tau/2, \sigma} _{|\beta|}  M^{\tau/2, \sigma} _{|\beta|}
\leq
C 
M^{\tau, \sigma} _{|\alpha|} M^{\tau, \sigma} _{|\beta|},  
$$
i.e. $ f \in \mathcal S_{(\mathcal{M}_\sigma )}^{(\mathcal{M}_\sigma)} (\mathbb R^{n}) $.

Finally, $ g \in \mathcal S_{(\mathcal{M}_\sigma )}^{(\mathcal{M}_\sigma)} (\mathbb R^{n}) $
follows from 
$  R_g f = \overline{R_f g}$.
\end{proof}

\par

\begin{Remark}
An alternative proof of Theorem \ref{nec-suf-cond} can be given using arguments based on the kernel theorem for Gelfand-Shilov spaces with extended Gevrey regularity. This is beyond the scope of the current contribution and will be studied elsewhere. For details on kernel theorems and their connection to nuclearity, we refer to \cite{DNV2021}.
\end{Remark}

%%%%%%%%%%%%%%%%%%%%%%%%%%%%%%%%%%%%%%%%%%
%\section{Discussion}

%The family of extended Gevrey classes introduced in \cite{PTT-01} provides a convenient framework for studying regularity properties beyond the usual Gevrey-type regularity. In this contribution, we consider both regularity and decay, introducing Gelfand-Shilov-type spaces in the context of extended Gevrey regularity. Our approach is related to, and inspired by, the recently explored weighted matrices/functions approach to Gelfand-Shilov spaces \cite{BJOS-2021, BJOS, DNV2021, DNV2024}. The results exhibited in this paper can be useful in situations where, on one hand, Gevrey-type regularity is too restrictive, and on the other hand, Schwartz-type regularity is too general.
%%%%%%%%%%%%%%%%%%%%%%%%%%%%%%%%%%%%%%%%%%
%\section{Conclusions}

%This section is not mandatory, but can be added to the manuscript if the discussion is unusually long or complex.

%%%%%%%%%%%%%%%%%%%%%%%%%%%%%%%%%%%%%%%%%%
%\vspace{6pt}

%%%%%%%%%%%%%%%%%%%%%%%%%%%%%%%%%%%%%%%%%%
%% optional
%\supplementary{The following supporting information can be downloaded at:  \linksupplementary{s1}, Figure S1: title; Table S1: title; Video S1: title.}

\section{Acknowledgments}

%%%%%%%%%%%%%%%%%%%%%%%%%%%%%%%%%%%%%%%%%%
%\textbf{Author Contributions:}
%\authorcontributions{
%For research articles with several authors, a short paragraph specifying their individual contributions must be provided. The following statements should be used ``Conceptualization, X.X. and Y.Y.; methodology, X.X.; software, X.X.; validation, X.X., Y.Y. and Z.Z.; formal analysis, X.X.; investigation, X.X.; resources, X.X.; data curation, X.X.; writing---original draft preparation, X.X.; writing---review and editing, X.X.; visualization, X.X.; supervision, X.X.; project administration, X.X.; funding acquisition, Y.Y.
% All authors have made equal and substantial contributions to the composition of this study. Conceptualization, N.T., methodology, writing—original draft preparation and writing—review and editing, N.T., F.T., and M. \v Z. All authors have read and agreed to the published version of the manuscript.
%'', please turn to the  \href{http://img.mdpi.org/data/contributor-role-instruction.pdf}{CRediT taxonomy} for the term explanation. Authorship must be limited to those who have contributed substantially to the work~reported.
%}

%\textbf{Funding:}
%\funding{
This research was supported by the Science Fund of the Republic of
Serbia, $\#$GRANT No. 2727, {\it Global and local analysis of operators and
distributions} - GOALS.  The authors were also supported by the Ministry of Science, Technological Development, and Innovation of the Republic of Serbia— the first and third authors by Grants No. 451-03-137/2025-03/200125 and 451-03-136/2025-03/200125, and the second author by Grant No. 451-03-136/2025-03/200156.

\end{document}